\newcommand{\defeq}{\colonequals}
\numberwithin{equation}{subsection}
\numberwithin{subsection}{section}
\newtheorem*{namedtheorem}{\theoremname}
\newcommand{\theoremname}{testing}
\newtheorem{theorem}[subsubsection]{Theorem}
\newtheorem{proposition}[subsubsection]{Proposition}
\newtheorem{proposition-definition}[subsubsection]
{Proposition-Definition}
\newtheorem{corollary}[subsubsection]{Corollary}
\newtheorem{lemma}[subsubsection]{Lemma}
\theoremstyle{definition}
\newtheorem{definition}[subsubsection]{Definition}
\newtheorem{example}[subsubsection]{Example}
\newtheorem{remark}[subsubsection]{Remark}
\renewcommand{\thesubsubsection}{\ifnum\value{subsection}=0
	\arabic{section}.\arabic{subsubsection}%
\else
	\arabic{section}.\arabic{subsection}.\arabic{subsubsection}%
\fi}
\let\c@equation\c@subsubsection
\let\subsection@old\subsection
\def\subsection#1{\ifnum\value{subsubsection}>0 \ifnum\value{subsection}=0
	\setcounter{subsection}{\value{subsubsection}}%
\fi \fi
\subsection@old{#1}}
\mathchardef\ordinarycolon\mathcode`\:
\theoremstyle{remark}
\newcommand{\cX}{{\mathcal X}} 
\newcommand{\cY}{{\mathcal Y}} 
\newcommand{\tX}{\widetilde{X}}
\newcommand{\tY}{\widetilde{Y}}
\newcommand{\tcX}{\widetilde{\mathcal{X}}}
\newcommand{\tcY}{\widetilde{\mathcal{Y}}}
\newcommand\cA{\mathcal{A}}
\newcommand\cB{\mathcal{B}}
\newcommand\cC{\mathcal{C}}
\newcommand\cE{\mathcal{E}}
\newcommand\cL{\mathcal{L}}
\newcommand\cO{\mathcal{O}}
\newcommand\cS{\mathcal{S}}
\newcommand\cU{\mathcal{U}}
\def\onorm#1{\wchoice{\mathnormal}{#1}{\woverbar}}
\def\unorm#1{\wchoice{\mathnormal}{#1}{\wunderbar}}
\def\ocal#1{\wchoice{\mathcal}{#1}{\woverbar}}
\def\ufrak#1{\wchoice{\mathfrak}{#1}{\wunderbar}}
\newcommand{\oM}{\onorm{M}}
\newcommand\ocM{\ocal{M}}
\newcommand{\oC}{\onorm{C}}
\newcommand{\ocC}{\ocal{C}}
\newcommand\uC{\unorm{C}}
\newcommand\uf{\unorm{f}}
\newcommand\uS{\unorm{S}}
\newcommand\uT{\unorm{T}}
\newcommand\uX{\unorm{X}}
\newcommand\uY{\unorm{Y}}
\newcommand\uGamma{{\unorm{\Gamma}}}
\renewcommand\AA{\mathbb{A}}
\newcommand\CC{\mathbb{C}}
\newcommand\GG{\mathbb{G}}
\newcommand\NN{\mathbb{N}}
\newcommand\QQ{\mathbb{Q}}
\newcommand\RR{\mathbb{R}}
\newcommand\ZZ{\mathbb{Z}}
\newcommand\fL{\mathfrak{L}}
\newcommand\fM{\mathfrak{M}}
\newcommand\fU{\mathfrak{U}}
\newcommand{\ufM}{\ufrak{M}}
\newcommand{\gp}{{\operatorname{gp}}}
\newcommand{\Log}{{\operatorname{Log}}}
\newcommand\bgm{\cB\GG_m}
\newcommand{\Gm}{\GG_m}
\newcommand{\uBGm}{\underline{\smash{\cB\GG}\kern-1pt}\kern1pt_m}
\newcommand\spec{\operatorname{Spec}}
\newcommand\Spec{\operatorname{Spec}}
\newcommand{\fpr}{\mathop{\times}}
\DeclareMathOperator{\Hom}{Hom}
\def\change#1{\begingroup\color{black}#1\endgroup}
\begin{document}

\title{Boundedness of the space of stable logarithmic maps}
\begin{abstract}
We prove that the moduli space of stable logarithmic maps with fixed numerical invariants, from logarithmic curves to a fixed projective target logarithmic scheme with fine and saturated logarithmic structure, is a proper algebraic stack. This was previously known only with further restrictions on the logarithmic structure of the target.
\end{abstract}
\author[Abramovich]{Dan Abramovich}
\author[Chen]{Qile Chen}
\author[Marcus]{Steffen Marcus}
\author[Wise]{Jonathan Wise}
\date{\today}
\thanks{Abramovich is supported in part by NSF grant
   DMS-1162367;
Chen is supported in part by the Simons foundation and NSF grant DMS-1403271; Wise is supported by an NSA Young Investigator's Grant, Award \#H98230-14-1-0107.}

\address[Abramovich]{Department of Mathematics\\
Brown University\\
Box 1917\\
Providence, RI 02912\\
U.S.A.}
\email{abrmovic@math.brown.edu}

\address[Chen]{Department of Mathematics\\
Boston College\\
140 Commonwealth Avenue\\
Chestnut Hill, MA 02467-3806\\
U.S.A.}
\email{q\_chen@math.columbia.edu}

\address[Marcus]{Mathematics and Statistics\\
The College of New Jersey\\
Ewing, NJ 08628\\
U.S.A.}
\email{marcuss@tcnj.edu}

\address[Wise]{University of Colorado, Boulder\\
Boulder, Colorado 80309-0395\\ USA}
\email{jonathan.wise@math.colorado.edu}

\subjclass[2010]{14H10, 14N35, 14D23, 14A20}
\keywords{Stable maps, Logarithmic structures, Moduli spaces, Algebraic stacks}

\maketitle
\setcounter{tocdepth}{1}
\tableofcontents
\section{Introduction}

We work over \change{$\CC$}.
All logarithmic structures are assumed fine and saturated, and $\Log$ denotes the algebraic stack parameterizing fine and saturated logarithmic structures as in \cite{Olsson_log}.

\subsection{Statement of result}\label{Sec:statement}

We are given a proper, fine and saturated logarithmic scheme $X=(\uX,M)$ with projective underlying scheme $\uX$. In \cite{GS,Chen,AC} a stack $\ocM_\Gamma(X)$ of stable logarithmic maps of numerical type $\Gamma$ is described. The purpose of this paper is to complete a proof of the following theorem:
\begin{theorem}\label{Th:main}
The stack $\ocM_\Gamma(X)$ is a proper Deligne-Mumford stack. The map $\ocM_\Gamma(X) \to \ocM_\uGamma(\uX)$ to the stack of stable maps of $\uX$ with the underlying numerical data $\uGamma = (g,n,\beta)$ is representable.
\end{theorem}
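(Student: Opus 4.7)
The plan is to build on prior work \cite{GS,Chen,AC}, which already constructs $\ocM_\Gamma(X)$ as an algebraic stack locally of finite type, establishes the valuative criterion for separatedness, and proves existence of limits after a finite ramified base change on a trait. The remaining content of the theorem comprises: (i) representability of the forgetful morphism $\ocM_\Gamma(X)\to \ocM_\uGamma(\uX)$; (ii) boundedness (quasi-compactness) of $\ocM_\Gamma(X)$; and (iii) projectivity of the coarse moduli space. I would address these in that order.

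For (i), given a $\uT$-family of underlying stable maps $(\uC,\uf)\to \uX$, I would show that logarithmic enhancements of numerical type $\Gamma$ form an algebraic space of finite type over $\uT$. By the theory of minimal logarithmic structures, each such enhancement is specified by (a) a tropical combinatorial type compatible with $\Gamma$, of which there are finitely many, and (b) within each type, toric/algebraic data describing slopes at nodes and marked points. Piece (b) cuts out a locally closed subspace of a concrete toric construction over $\uT$, yielding representability by an algebraic space of finite type.

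The central step is (ii), which is also the principal obstacle. Since $\ocM_\uGamma(\uX)$ is of finite type by Kontsevich--Alexeev, representability from (i) reduces boundedness to the following: as the underlying stable map varies in $\ocM_\uGamma(\uX)$, the combinatorial types of enhancements of numerical type $\Gamma$ do not proliferate. The approach is to stratify $\ocM_\uGamma(\uX)$ by dual graph and argue that on each stratum only finitely many tropical types of enhancements of type $\Gamma$ occur, with each contributing a finite-dimensional toric piece of bounded dimension. The subtlety is that degenerations of the underlying map can spawn new boundary components and thereby new candidate enhancements; controlling this will require using the global logarithmic geometry of $X$ (not merely the local existence of minimal log structures) together with the discrete input $\Gamma$. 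This is the hard part of the argument and where I expect most of the work to sit.

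Assembling (i) and (ii) with the valuative criteria from \cite{GS,Chen,AC} yields that $\ocM_\Gamma(X)$ is a proper algebraic stack. For (iii), the forgetful morphism is representable, proper, and quasi-finite---since a fixed underlying stable map admits only finitely many combinatorial types of enhancements of numerical type $\Gamma$, each giving a unique minimal log structure up to isomorphism---hence finite. Since $\ocM_\uGamma(\uX)$ has projective coarse moduli space by Kollár--Kontsevich, the induced finite map on coarse spaces yields projectivity of the coarse moduli of $\ocM_\Gamma(X)$ by pulling back an ample line bundle.
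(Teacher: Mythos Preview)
Your proposal has genuine gaps, and the approach diverges from the paper's in the decisive step.

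First, you assume that separatedness and the valuative criterion for properness are already established in \cite{GS,Chen,AC} for general $X$. They are not: those papers prove them only under the hypothesis that $\oM_X$ (or $\oM_X^{\gp}$) is globally generated. For arbitrary $X$ these are new results of this paper (Proposition~\ref{Prop:separation}), and they are obtained by the same reduction used for boundedness: lift to a modification $Y\to X$ with globally generated characteristic monoid, apply the known case there, and descend.

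Second, and more seriously, your boundedness strategy---stratify $\ocM_\uGamma(\uX)$ by dual graph and bound the tropical types of enhancements---is precisely the kind of direct combinatorial approach the paper reports did \emph{not} yield a proof. You yourself flag the difficulty (``degenerations of the underlying map can spawn new boundary components'') but offer no mechanism to control it; the sketch stops exactly where the content should begin. The paper circumvents this entirely. It constructs, via subdivision of the Artin fan $\cX$ of $X$, a projective logarithmically \'etale modification $Y\to X$ such that $\oM_Y$ is globally generated (Proposition~\ref{Prop:resolution}). Then it shows that for each $\Gamma$ on $X$ there are finitely many discrete data $\Gamma_i$ on $Y$ with $\coprod_i \ocM_{\Gamma_i}(Y)\to \ocM_\Gamma(X)$ surjective (Proposition~\ref{Prop:discrete-surjective}); this uses a bijection of connected components $\pi_0(\wedge\cY)\to\pi_0(\wedge\cX)$ to bound contact orders, and a computation of $\deg_C f^\ast L$ in terms of contact orders to bound curve classes. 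Since each $\ocM_{\Gamma_i}(Y)$ is proper by \cite{GS,AC}, the image in the separated stack $\ocM_\Gamma(X)$ is proper, giving boundedness without ever enumerating tropical types on $X$.

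Finally, your projectivity argument presumes the forgetful morphism is \emph{finite}. The paper explicitly remarks that finiteness is known only under the restrictive hypotheses of \cite{GS,AC} and is expected, but not proved here, in general; so you cannot invoke it. Representability itself is not argued in this paper but quoted from \cite{Wise} (Theorem~\ref{Th:oldproperties}).
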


This result was proven in \cite[Theorems 0.1 and 0.2]{GS} under the assumption that the  sheaf of groups $\oM^\gp$ associated to the characteristic monoid $\oM=M/\cO_X^*$ is globally generated. It was also proven in \cite[Theorem 3.15]{AC} under the stronger assumption that  $\oM$ itself is globally generated. Many cases of interest are covered by the results in \cite{GS,AC}, but significantly the case of a toroidal embedding with self intersections is not.  In the cited papers it was hoped that the result would hold in general, which Theorem \ref{Th:main} provides.

The following key properties, which are part of Theorem \ref{Th:main}, were shown in   \cite[Theorem 0.1]{GS} for  Zariski logarithmic structures $X$ and \cite[Theorem 3.15]{AC}  when the characteristic monoid $\oM$ is  globally generated; the general case is proved in \cite{Wise}.

\begin{theorem}[\cite{Wise}]\label{Th:oldproperties}
\begin{enumerate}
\item  $\ocM_\Gamma(X)$ is algebraic and locally of finite type over $\CC$;
\item the map $\ocM_\Gamma(X) \to \ocM_\uGamma(\uX)$ is representable by algebraic spaces.
\end{enumerate}
\end{theorem}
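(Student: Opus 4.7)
My plan is to view $\ocM_\Gamma(X)$ as an open substack of a relative Hom-stack of logarithmic morphisms over $\ocM_\uGamma(\uX)$, and to verify both parts by combining Artin's representability criteria with a rigidity argument for automorphisms. Let $\pi: \cC \to \ocM_\uGamma(\uX)$ denote the universal prestable curve and $f: \cC \to \uX$ the universal map. A lift of an $S$-point $(\uC/\uS, \ou)$ to $\ocM_\Gamma(X)$ consists of log structures $M_S$ on $S$ and $M_C$ on $C$ making $(C, M_C) \to (S, M_S)$ a log smooth curve of the prescribed combinatorial type, together with a log morphism $u: C \to X$ lifting $\ou$ and satisfying the minimality condition. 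This identifies $\ocM_\Gamma(X)$ with a locally closed substack of a relative Hom-stack $\underline{\Hom}_{\LogSch}(\cC^{\log}, X)$ over a base built from $\ocM_\uGamma(\uX)$ and the Olsson stack of log smooth curves.

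For part (1), I would apply Artin's representability criteria to this Hom-stack. Deformations and obstructions of a log morphism $u: C \to X$ over fixed underlying data are controlled by $\mathrm{Ext}$-groups of the logarithmic cotangent complex $u^\ast \mathbb{L}_{X/k}^{\log}$ on $C$, which are coherent by properness of $C/S$. Schlessinger's conditions, effectivity of formal deformations, and openness of versality for the Hom-stack of log morphisms are the content of \cite{Wise}; together these yield algebraicity and local finite type. The minimality locus is then open in the Hom-stack, since the minimal characteristic monoid is computed universally from the tropical type of the map and the canonical comparison map with the actual $\oM_S$ is an isomorphism exactly on an open substack.

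For part (2), representability reduces to showing that a logarithmic enhancement of an ordinary stable map has no non-trivial automorphisms over the underlying data. Such an automorphism consists of $\alpha_S: M_S \to M_S$ and $\alpha_C: M_C \to M_C$ over the identity on $(\uC/\uS, \ou)$, commuting with the log structural map $M_S \to M_C$, with $u^\flat: u^{-1}M_X \to M_C$, and restricting to the identity on units. Minimality forces the induced map $\widebar{\alpha}_S$ on the characteristic sheaf to be the identity, since $\oM_S$ is determined by tropical data extracted from $\ou$ together with the combinatorics of $M_C$; the lift of $\widebar{\alpha}_S$ to $M_S$ is then unique because $\cO_S^\times$ is fixed. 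The same reasoning handles $\alpha_C$.

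The chief obstacle is the algebraicity step. Without the hypothesis that $\oM_X^\gp$ is globally generated, as in \cite{GS}, or that $\oM_X$ itself is globally generated, as in \cite{AC}, one cannot realize the Hom-stack as a substack of a finite-dimensional ambient stack using explicit global sections of the characteristic monoid. The innovation of \cite{Wise} is to work intrinsically with the Artin fan of $X$ and to verify Artin's criteria étale-locally on source and target, gluing via effective descent for fine saturated log structures.
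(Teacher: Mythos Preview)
The paper does not give an independent proof of this statement; it is recorded as a theorem of \cite{Wise}.  The only argument the paper supplies is in Section~\ref{Sec:review}: it states the more general Theorem~\ref{Th:Wise} (again from \cite{Wise}) asserting that the stack $\ufM(X)$ of \emph{minimal} prestable logarithmic maps is an open substack of the algebraic stack $\fL^{str}(X)$ of \emph{all} prestable logarithmic maps (over arbitrary log structures on the base), and that $\ufM(X)\to\ufM(\uX)$ is representable by algebraic spaces.  Theorem~\ref{Th:oldproperties} then follows immediately because $\ocM_\Gamma(X)$ is an open substack of $\fM(X)$.

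Your proposal attempts something the paper does not: to sketch the actual content of the proof in \cite{Wise}.  The overall shape you describe --- construct an ambient stack of logarithmic maps, verify Artin's criteria using the log cotangent complex, then carve out the minimal locus as open, and prove representability by a rigidity argument showing minimal objects have no nontrivial automorphisms over the underlying stable map --- is broadly correct in spirit.  Two points of comparison, though.  First, the Hom-stack you write down is not well-posed as stated: you cannot form a relative $\underline{\Hom}$ from ``$\cC^{\log}$'' to $X$ before the log structure on the curve has been specified; the actual construction builds the large stack $\fL^{str}(X)$ over \emph{schemes} by allowing all fine saturated log structures on the base and curve, and then identifies the minimal ones.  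Second, your closing paragraph slightly misattributes the mechanism: the argument in \cite{Wise} does not proceed via Artin fans of $X$, but rather by working intrinsically with the fibered category of log maps over logarithmic schemes and characterizing minimality abstractly so that openness follows without any global-generation hypothesis on $\oM_X$.  These are refinements rather than gaps; your outline is a reasonable reconstruction, just at a finer level of detail than the paper itself commits to.
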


To complete a proof of Theorem \ref{Th:main}, it remains to show that for general $X$,
\begin{enumerate}
 \item the stack $\ocM_{\Gamma}(X)$ is of finite type, see Proposition \ref{Prop:discrete-surjective};
 \item  $\ocM_\Gamma(X)$ is separated and satisfies the weak valuative criterion for properness, see Proposition \ref{Prop:separation}.
\end{enumerate}
The  two  statements above are proven in this paper by reducing to the case where the characteristic monoid  $\oM$ is globally generated. This case was shown in \cite[Corollary 3.11]{AC}, by further reducing to the rank one case treated in \cite{Chen}.

\begin{remark}
In \cite{AC, GS} it is shown that the map $\ocM_\Gamma(X) \to \ocM_\uGamma(\uX)$ is finite under the assumptions made in those papers
\change{. This is shown in general in \cite{qfnew}.  It follows from this statement that}
the stack $\ocM_\Gamma(X)$ is also projective, being finite over $\ocM_\uGamma(\uX)$, which is known to be projective.
\end{remark}

\subsection{Method}
The main problem is boundedness, namely statement (1) listed above. The problem eluded standard approaches of \'etale descent. Instead we use a form of non-flat logarithmic \'etale descent. 

Our strategy is to  use the ``virtual birational invariance" of the moduli spaces,  proven in \cite{AWnew} when $X$ is logarithmically smooth. Specifically, we construct a proper and logarithmically \'etale morphism $Y \to X$ such that the characteristic sheaf $\oM_Y$ is globally generated (Proposition~\ref{Prop:resolution}). We then show that the map of moduli spaces $\ocM(Y) \to \ocM(X)$ is surjective (Proposition~\ref{Prop:surjective}). We further show that for each numerical datum $\Gamma$ on $X$ there is a finite collection of numerical data $\Gamma_i$ on $Y$ such that $\coprod\ocM_{\Gamma_i}(Y) \to \ocM_\Gamma(X)$ is surjective (Proposition~\ref{Prop:discrete-surjective}). Since $\ocM_{\Gamma_i}(Y)$ is proper it follows that $\ocM_\Gamma(X)$ is bounded, as required.

We now proceed to describe the steps in more detail.

\subsection{The Artin fan of $X$}
Olsson \cite{Olsson_log} associates to the logarithmic structure $X$ a canonical morphism $X \to \Log$ to the stack of logarithmic structures.  Under mild assumptions on $X$ there is an initial factorization of this map through a strict, representable, \'etale map $\cX \rightarrow \Log$.  Following \cite{AWnew} we call $\cX$ the \emph{Artin fan of $X$}.
 
The construction of $\cX$ has its origin in unpublished notes on gluing Gromov--Witten invariants by Q.\ Chen and by M.\ Gross.  It is closely related to what is known as the {\em Kato fan} $F(X)$ of $X$ \cite[Sections 9 and 10]{Kato-toric}, and to the associated {\em generalized polyhedral cone complex} $\Sigma(X)$ defined in \cite{Thuillier, ACP}.  A more complete picture of the relationship between these objects, as well as with Berkovich spaces, is given in \cite{Ulirsch}. The simplest cases of Artin fans were used previously
  in \cite{ACFW,ACWnew,CMW}.

We have not attempted to give a definitive treatment of the theory of Artin fans here, as the precise outlines of the theory remain murky to us.  One of the most troublesome issues is the failure of naturality of the morphism from a scheme to its Artin fan (see Example~\ref{ex:non-functoriality}).  Absent the more complete foundations we hope to be able to present in the future, the reader may consult~\cite{AWnew} or~\cite{logsurvey} for further details about Artin fans.

Artin fans are used in the following statement, which is our key reduction step.

\begin{proposition}\label{Prop:resolution}
There exists a representable, projective, birational, and logarithmically \'etale morphism $\cY \to \cX$ such that the sheaf of characteristic monoids $\oM_\cY$ is globally generated. Writing $Y = X \times_\cX \cY$, we have a projective and logarithmically  \'etale morphism $Y \to X$ such that the characteristic sheaf  $\oM_Y$ is globally generated.
\end{proposition}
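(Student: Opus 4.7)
The plan is to work combinatorially with the Artin fan, using the dictionary (see \cite{Ulirsch, ACP}) between Artin fans and generalized rational polyhedral cone complexes, under which a representable, logarithmically \'etale birational morphism $\cY \to \cX$ corresponds to a subdivision of the associated cone complex $\Sigma := \Sigma(\cX)$. Under this correspondence, global sections of $\oM_\cY$ are globally defined nonnegative integer piecewise linear functions on the refined complex $\Sigma'$, while the stalk of $\oM_\cY$ at the point corresponding to a cone $\sigma$ is the dual monoid $\sigma^\vee$. Global generation of $\oM_\cY$ thus amounts to the combinatorial requirement that, for every cone $\sigma$ of $\Sigma'$, the monoid $\sigma^\vee$ is generated by restrictions of global nonnegative PL functions on $\Sigma'$.

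The main step is to construct a projective subdivision $\Sigma' \to \Sigma$ with this property. I would carry this out in two phases. First, resolve any self-gluings of $\Sigma$ by iterated barycentric-type star subdivisions; in the prototypical case of $\RR_{\geq 0}^2$ self-glued by the swap $(a,b) \leftrightarrow (b,a)$ (the cone complex of a nodal curve), adding the diagonal ray $(1,1)$ replaces the self-identification by an identification of two distinct maximal cones. Second, once the gluings are trivialized, further subdivide into simplicial cones sufficiently fine that for each ray $\rho$ of $\Sigma'$ the ``hat function'' taking value $1$ on $\rho$ and $0$ on every other ray extends to a global nonnegative PL function on $\Sigma'$; on each maximal simplicial cone these hat functions restrict to the dual basis and hence collectively generate the stalk. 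Projectivity of the subdivision is arranged by producing it from a strictly convex integer PL function on $\Sigma$.

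To finish, let $\cY$ be the Artin fan of $\Sigma'$ and set $Y = X \times_\cX \cY$. Since $X \to \cX$ is strict, so is the base change, and $\oM_Y$ is the pullback of $\oM_\cY$ and therefore remains globally generated; the properties of being projective, birational, and logarithmically \'etale are likewise preserved under a strict base change. Representability also descends from $\cY \to \cX$.

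The principal difficulty lies in the second phase of the subdivision construction: simultaneously guaranteeing that all self-gluings of $\Sigma$ are trivialized, that the subdivision is fine enough to admit nonnegative hat functions at every ray, and that the entire refinement is induced by a single strictly convex integer PL function (hence projective). These three requirements pull in different directions---resolving gluings and enforcing hat-function positivity may each demand numerous new rays and cones---and verifying that a compatible simultaneous choice exists is the essential combinatorial content of the proposition.
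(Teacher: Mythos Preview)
Your outline is essentially the paper's, but the paper organizes the argument around a sharper notion that dissolves what you call the ``principal difficulty.'' The paper calls a vector $\cA \to \cX$ \emph{stable} if every factorization through a strict chart $\cA_\sigma \to \cX$ is unique, and proves (Lemma~\ref{lem:barycentric-stable}) that after a \emph{single} barycentric subdivision every vector becomes stable. Stability is preserved under further star subdivisions at stable vectors, each of which is individually projective (Proposition~\ref{prop:star-subdivision}); so one runs the usual toric resolution algorithm to make $\cY$ smooth while remaining in the ``all vectors stable'' regime. At that point the rays of $\cY$ are globally well defined, and the resulting map $\cY \to \cA^S$ (your hat-function construction, with $S$ the set of rays) is automatically strict. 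The three requirements you worry about are thus achieved sequentially and never in tension; projectivity comes as a composition of projective star subdivisions, rather than from a single global strictly convex support function.

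One genuine technical slip: you write ``simplicial'' where you need ``smooth.'' For a simplicial but non-smooth cone $\sigma$, the rational dual-basis vectors---your hat functions---need not lie in the integral monoid $\sigma^\vee$, and the actual Hilbert-basis generators of $\sigma^\vee$ need not extend to global nonnegative PL functions. The paper's Step~2 in Theorem~\ref{thm:subdiv} explicitly passes to a smooth subdivision for exactly this reason.
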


See Corollary~\ref{cor:target-modify}.

\subsection{Moduli of prestable maps}\label{sec:introprestablemaps}

Following \cite[\change{Section~3}]{AWnew}, we define a moduli stack $\fM(\cX)$ of prestable maps with target $\cX$, a stack $\fM(\cY)$ of prestable maps  with target $\cY$, and a stack $\fM'(\cY \to \cX)$ of prestable maps which are relatively stable for $\cY \to \cX$. All three are shown in \cite[\change{Proposition~3.2 and Proposition 1.6.2}]{AWnew} to be logarithmically \'etale over the stack $\fM$ of prestable curves.   There is a tautological diagram of  stacks 
\begin{equation} \label{eqn:3} \vcenter{\xymatrix{
\ocM(Y) \ar[r]\ar[d] &  		\ocM(X)\ar[d] \\
\fM'(\cY \to \cX)\ar[r]\ar[d] &	\fM(\cX)\\
\fM(\cY)}}
\end{equation}
with strict vertical arrows and cartesian square.  The morphism $\fM'(\cY \to \cX) \to \fM(\cX)$ is birational \cite[\change{Proposition 5.2.1}]{AWnew}. We prove in Corollary \ref{Cor:valuative} that it satisfies the valuative criterion for properness and is surjective. This gives in particular the following.%

\begin{proposition}\label{Prop:surjective} The morphism  $\ocM(Y) \to \ocM(X)$ is proper and surjective.
\end{proposition}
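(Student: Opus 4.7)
The argument should be an immediate consequence of the cartesian square in \eqref{eqn:3}. Since the vertical arrows in that square are strict, the fiber product in the category of fine and saturated log stacks agrees with the one in ordinary stacks, giving
\[
\ocM(Y) \;\cong\; \ocM(X) \times_{\fM(\cX)} \fM'(\cY \to \cX).
\]
Consequently, $\ocM(Y) \to \ocM(X)$ is the base change of
\[
p \colon \fM'(\cY \to \cX) \to \fM(\cX)
\]
along the strict morphism $\ocM(X) \to \fM(\cX)$. Both surjectivity and properness are stable under base change, so it suffices to establish the corresponding properties for $p$ itself.

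Corollary~\ref{Cor:valuative} is set up to provide exactly what is needed for $p$: surjectivity and the valuative criterion for properness. To upgrade the valuative criterion to full properness I would check two auxiliary properties. Separation of $p$ follows from the uniqueness (up to unique isomorphism) of a relatively stable lift of a fixed prestable map $C \to \cX$ to a map $C \to \cY$, which is built into the moduli description of $\fM'(\cY \to \cX)$. Finite type of $p$ follows from the representability and projectivity of $\cY \to \cX$ supplied by Proposition~\ref{Prop:resolution}, which bound the space of lifts of a fixed source curve and a fixed map to $\cX$.

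The main technical content is thus contained in Corollary~\ref{Cor:valuative}; once that is granted, the reduction to $p$ via base change along the cartesian square of \eqref{eqn:3} is essentially formal. The principal obstacle in the argument is therefore the upstream result controlling $p$ itself, together with verifying that strictness of the vertical arrows in \eqref{eqn:3} genuinely makes the base change compatible with the logarithmic structures, so that no passage to saturation spoils the identification $\ocM(Y) = \ocM(X) \times_{\fM(\cX)} \fM'(\cY \to \cX)$.
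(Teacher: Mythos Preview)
Your approach matches the paper's: deduce the result from the cartesian square~\eqref{eqn:3} by base change, using Corollary~\ref{Cor:valuative} (and its input, Proposition~\ref{prop:alteration-lift}) to control $p : \fM'(\cY \to \cX) \to \fM(\cX)$. The paper's proof is in fact nothing more than the sentence immediately preceding the statement of the proposition, and does not spell out the separatedness and finite-type checks you add; one small caution is that your sketch for finite type of $p$ via projectivity of $\cY \to \cX$ is not quite complete as stated, since bounding the degree of the lifted curve is exactly what Section~5 is for, but in the paper's logic only surjectivity and the valuative criterion from Proposition~\ref{Prop:surjective} are actually used downstream.
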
 
A \change{direct} argument then shows the following, see Section \ref{Sec:separation}:
\begin{proposition}\label{Prop:separation}
$\ocM(X)$ is separated and satisfies the weak valuative criterion for properness.
\end{proposition}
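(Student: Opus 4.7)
The plan is to deduce both halves of Proposition~\ref{Prop:separation} from the corresponding properties of $\ocM(Y)$ via the proper surjection $\pi\colon \ocM(Y)\to \ocM(X)$ of Proposition~\ref{Prop:surjective}. Since $\oM_Y$ is globally generated by Proposition~\ref{Prop:resolution} and $\uY$ is projective, each component $\ocM_{\Gamma'}(Y)$ is proper by \cite[Theorem 3.15]{AC}, and hence separated. Properness of $\pi$ will supply both existence and uniqueness of lifts after finite base change, which is what allows the reduction to go through.

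For the weak valuative criterion for properness, I would start with a DVR $R$ with fraction field $K$ and a morphism $\xi_\eta\colon\Spec K\to \ocM(X)$. Surjectivity of $\pi$, together with its properness, produces, after a finite extension $K'/K$ and a DVR $R'\subset K'$ dominating $R$, a lift $\tilde\xi_\eta\colon\Spec K'\to \ocM(Y)$. Properness of $\ocM(Y)$ then extends $\tilde\xi_\eta$ to $\tilde\xi\colon\Spec R'\to\ocM(Y)$, and the composition $\pi\circ\tilde\xi\colon\Spec R'\to\ocM(X)$ is the required extension of $\xi_\eta|_{K'}$.

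For separation, suppose $\xi_1,\xi_2\colon\Spec R\to\ocM(X)$ together with an isomorphism $\alpha\colon \xi_1|_\eta\simeq \xi_2|_\eta$. Lifting the common generic restriction to $\tilde\xi_\eta\colon \Spec K'\to \ocM(Y)$ and applying the valuative criterion for the proper morphism $\pi$ produces, over a DVR $R'\subset K'$ dominating $R$, unique extensions $\tilde\xi_1,\tilde\xi_2\colon\Spec R'\to \ocM(Y)$ of $\tilde\xi_\eta$ lying over $\xi_1|_{R'}$ and $\xi_2|_{R'}$ respectively (using $\alpha$ to identify $\tilde\xi_\eta$ as a lift of $\xi_2|_{\eta'}$). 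Since $\ocM(Y)$ is separated and the two lifts already agree on $\eta'$, there is a unique isomorphism $\tilde\xi_1\cong\tilde\xi_2$ extending the identity on $\tilde\xi_\eta$; applying $\pi$ yields an isomorphism $\xi_1|_{R'}\cong\xi_2|_{R'}$ extending $\alpha|_{\eta'}$. A routine faithfully flat descent along $\Spec R'\to\Spec R$ then produces the desired extension of $\alpha$ to $\Spec R$, the cocycle condition over $\Spec(R'\otimes_R R')$ being inherited from the single datum $\alpha$ on $\eta$ via the uniqueness clause of the valuative criterion for $\pi$.

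The anticipated difficulty is not really in either argument above in isolation, but in assembling them so as to apply only the known properness of $\ocM_{\Gamma'}(Y)$ for appropriate auxiliary numerical types $\Gamma'$ on $Y$. This is precisely the discrete bookkeeping used for boundedness, and is packaged by Proposition~\ref{Prop:discrete-surjective}; once it is in hand, the separation and weak valuative criterion both follow by the template above.
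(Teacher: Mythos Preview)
Your argument is correct and follows the same template as the paper: lift to $\ocM(Y)$, use separatedness and the valuative criterion there, and push back down.  The paper carries this out by appealing directly to Proposition~\ref{prop:alteration-lift} and the cartesian square~\eqref{eqn:3} rather than to the packaged Proposition~\ref{Prop:surjective}, but the substance is the same; in particular the paper's uniqueness clause (``any extension $\tilde f$ lifts to $\ocM(Y)$, and since $\tilde f'$ is unique we obtain that $\tilde f$ is unique'') is exactly your separation step phrased in terms of objects rather than isomorphisms.

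Your final paragraph, however, is off the mark.  You do \emph{not} need Proposition~\ref{Prop:discrete-surjective} here.  A lift $\tilde\xi_\eta:\Spec K'\to\ocM(Y)$ lands in a single connected component $\ocM_{\Gamma'}(Y)$ because $\Spec K'$ is connected, and the extensions $\tilde\xi_1,\tilde\xi_2$ over the connected $\Spec R'$ remain in that same component.  Thus only the properness of one $\ocM_{\Gamma'}(Y)$ at a time is ever invoked, and that is already supplied by \cite[Corollary~3.11]{AC} (or \cite{GS}) once $\oM_Y$ is globally generated; no finiteness of the collection of relevant $\Gamma'$ enters.  In the paper's logical order Proposition~\ref{Prop:discrete-surjective} is established \emph{after} Proposition~\ref{Prop:separation}, so invoking it would in any case be out of sequence.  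Simply delete that paragraph and your proof stands.
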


\subsection{Numerical data}

 If $\ocM(Y)$ were of finite type we would now be done.  As it is anyway a disjoint union of connected components of finite type, it will be sufficient to show that finitely many of those components map to each connected component of $\ocM(X)$.  To do this, we identify numerical data on $\ocM(X)$ that admit finitely many lifts to $\ocM(Y)$\, with each lift corresponding to a component of $\ocM(Y)$.  These numerical data include the genus, the number of marked points, the homology class of the curve, and contact information associated to each marked point.  These are encoded in terms of logarithmic points:

\subsubsection{Moduli of logarithmic points} 
 
The logarithmic numerical data of $X$ are, by definition,  the connected components of the logarithmic evaluation stack $\wedge_\NN X$ parameterizing standard logarithmic points in $X$.  The evaluation stack $\wedge_P X$ for an arbitrary sharp monoid $P$ is constructed and described explicitly in \cite{ACGM}; \change{see} also \cite[Corollary~1.1.3]{Wise}, \cite[Section 3.2]{Gillam}.  Our use of this stack is limited to the case $\wedge_\NN X$, which we denote simply by $\wedge X$.

The formation of $\wedge X$ is covariantly functorial in $X$, so the morphism $Y\to X$ induces a morphism $\wedge Y\to \wedge X$.

\begin{proposition}\label{Prop:component-bounded}
\change{The morphism $\wedge Y \rightarrow \wedge X$ is of finite type, and each connected component of $\wedge X$ is of finite type.  In particular, the preimage of such a connected component has finitely many connected components.}
\end{proposition}

See Section~\ref{sec:contact-orders}.

\subsubsection{Contact orders} \label{sec:intrologpts}

Given a stable log map $f: C \to X$ over $S$, restricting to the $i$-th marking $\Sigma_i \subset C$, we obtain a family of log points $f|_{\Sigma_i}: \Sigma_i \to X$, hence the evaluation morphism $\mathrm{ev}_i: S \to \wedge X$. When $S$ is connected, we label the $i$-th marking by unique connected component of $\wedge X$ containing the image $\mathrm{ev}_i(S)$.  We denote this marking $c_i$ and call it the \emph{logarithmic numerical datum} or \emph{contact order} of the $i$-th marking.

\change{
\begin{remark}
With the notion of the Artin fan $\cX$ of $X$ in hand, one could redefine contact orders as connected component of $\wedge \cX$ instead. This has the advantage of being more combinatorial, while also being invariant under logarithmic modifications, as the proof of Corollary~\ref{cor:log-point-comps} demonstrates.
\end{remark}
}
\subsubsection{Degrees}
To bound $\beta_Y$ we have
\begin{proposition}\label{Prop:deg-constant}
 Let $f: C \to Y$ be a stable logarithmic  map whose stabilization $f': C' \to X$ has
discrete data $\Gamma$. Let $L$ be a relatively ample line bundle for $\cY/\cX$, and
denote by $L_Y$ its pullback to $Y$.  Then $\deg_C f^*L_Y$ is constant on
$\ocM_\Gamma(X)$, and determined combinatorially by~$\Gamma$.
\end{proposition}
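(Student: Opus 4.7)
The plan is to reduce the computation of $\deg_C f^*L_Y$ to a combinatorial quantity attached to the tropicalization of $f$, and then show this quantity is determined by $\Gamma$ alone. First, since $L_Y$ is pulled back from $L$ along the structure morphism $\pi\colon Y\to\cY$, the degree factors through the composition $h = \pi\circ f\colon C\to\cY$: explicitly $f^*L_Y = h^*L$, so that $\deg_C f^*L_Y = \deg_C h^*L$, a quantity depending only on the morphism $h$ into the Artin fan rather than on the full datum of $f$.

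Because $\cY$ is an Artin fan, the morphism $h$ is essentially combinatorial: it is determined by the induced map on characteristic monoids, or equivalently by the tropicalization of $f$, a map of generalized cone complexes from $\Sigma(C)$ into the cone complex $\Sigma_\cY$ underlying $\cY$. The line bundle $L$, being relatively ample for $\cY/\cX$, corresponds to an integral piecewise-linear function $\phi$ on $\Sigma_\cY$ that is linear on each cone of $\Sigma_\cX$; concretely, $\phi$ represents a Cartier divisor supported on the exceptional locus of $\cY\to\cX$. The degree $\deg_C h^*L$ can then be computed by the standard tropical intersection formula, as a sum of local contributions at the vertices and edges of $\Sigma(C)$ involving $\phi$ and the slopes of $h$.

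Finally, I would analyze how the tropicalization of $f$ is constrained by $\Gamma$. The stabilization map $C\to C'$ contracts precisely those components on which the underlying map to $X$ is constant; accordingly, the tropicalization of $f$ lifts that of $f'$ through the subdivision $\Sigma_\cY\to\Sigma_\cX$, refined exactly where the tropicalization of $f'$ meets cones of $\Sigma_\cX$ that are subdivided. Because $\phi$ is linear on each cone of $\Sigma_\cX$, the tropical intersection formula is invariant under such refinements: new vertices and edges introduced by the subdivision contribute cancelling terms. Thus $\deg_C f^*L_Y$ reduces to a combinatorial expression in the slopes and lengths recorded by $\Gamma$ together with the data of the subdivision, proving the claim. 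The main difficulty I anticipate is verifying the tropical intersection formula and its invariance under refinement; this follows from the Artin-fan description of line bundles as piecewise-linear functions, and in the prototypical case of a single star subdivision it reduces to the fact that the total contact order of the refined tropicalization with the exceptional cone is determined by the incoming and outgoing slopes already recorded in $\Gamma$.
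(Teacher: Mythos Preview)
Your reduction to the composition $h\colon C\to\cY$ into the Artin fan is the right first move, and the tropical framework is a reasonable way to organize the computation.  But there is a genuine error at the crux of the argument.  You assert that the piecewise-linear function $\phi$ associated to the relatively ample line bundle $L$ is \emph{linear on each cone of $\Sigma_{\cX}$}.  That is precisely the condition that $L$ be pulled back from $\cX$, hence relatively trivial; a relatively ample bundle corresponds instead to a function that is strictly \emph{convex} on each cone of $\Sigma_{\cX}$, with a nontrivial bend along every wall of the subdivision.  Consequently, when the tropicalization of $f'$ is refined by inserting vertices where it crosses those walls, the local contributions do \emph{not} cancel, and your invariance-under-refinement step collapses.

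The paper proceeds differently.  It uses the explicit presentation $L=\bigotimes_i L_i^{\otimes m_i}$ in terms of the exceptional divisors $\cE_i$ furnished by Corollary~\ref{cor:target-modify}\ref{cor:target-modify:3}, together with the logarithmic smoothness of $\fM(\cY)$: one may deform $(C,h)$ within its connected component to a curve that is smooth and meets each $\cE_i$ properly, where $\deg_C(\cE_i)=\sum_j c_j(E_i)$ is visibly the sum of contact orders at the markings.  This gives $\deg_C f^\ast L_Y=\sum_{i,j} m_i\, c_j(E_i)$, and the contact orders $c_j(E_i)$ in $\cY$ are determined by those in $\cX$---hence by $\Gamma$---via the bijection $\pi_0(\wedge\cY)\to\pi_0(\wedge\cX)$ of Corollary~\ref{cor:log-point-comps}.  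A correct tropical argument along your lines would have to replace the failed cancellation with the balancing condition at internal vertices, which does force the degree to depend only on the slopes at the legs; but that is essentially a reformulation of the deformation argument rather than the shortcut you propose.
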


 See Proposition \ref{lem:curve-boundedness}. By a standard argument, given $\Gamma$ there are only finitely many possibilities for $\beta_Y$ with image class $\beta$ and fixed $\beta_Y \cdot c_1(L_Y)$:  see Proposition \ref{Prop:bound-class}.
Together with Propositions \ref{Prop:surjective} and \ref{Prop:component-bounded} this implies:
\begin{proposition}\label{Prop:discrete-surjective}
For each numerical datum $\Gamma$ on $X$ there is a finite collection of numerical data $\Gamma_i$ on $Y$ such that $\coprod\ocM_{\Gamma_i}(Y) \to \ocM_\Gamma(X)$ is surjective.
\end{proposition}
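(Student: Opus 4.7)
The plan is to combine surjectivity of $\ocM(Y)\to\ocM(X)$ from Proposition~\ref{Prop:surjective} with finiteness of the possible lifts of numerical data from $X$ to $Y$. The numerical datum $\Gamma$ on $X$ consists of a genus $g$, a number of marked points $n$, a curve class $\beta$ on $\uX$, and, for each marked point $i=1,\ldots,n$, a connected component of $\wedge X$ recording the contact order. Since every stable logarithmic map $f\colon C\to Y$ stabilizes to a stable logarithmic map $f'\colon C'\to X$ with the same source genus and the same number of marked points, any numerical datum $\Gamma_Y$ on $Y$ whose stabilization is $\Gamma$ must share $g$ and $n$ with $\Gamma$.

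I would then handle the contact data at the marked points. The evaluation morphism $\wedge Y\to\wedge X$ is covariantly functorial, so the contact datum of $f$ at the $i$-th marked point pushes forward to the contact datum of $f'$ at the $i$-th marked point. By Proposition~\ref{Prop:component-bounded}, $\wedge Y\to\wedge X$ induces a bijection of connected components, and hence the contact datum on $Y$ at each marked point is uniquely determined by the contact datum of $\Gamma$ at that marked point. Thus the pointwise contact data on $Y$ is rigidly determined by $\Gamma$.

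The remaining freedom lies in the curve class $\beta_Y$ on $\uY$. Its pushforward to $\uX$ is the fixed class $\beta$, so it suffices to bound $\beta_Y$ in the fiber over $\beta$. Fix a relatively ample line bundle $L$ for $\cY\to\cX$ and let $L_Y$ denote its pullback to $Y$. By Proposition~\ref{Prop:deg-constant}, the intersection number $\beta_Y\cdot c_1(L_Y)=\deg_C f^*L_Y$ is a function of $\Gamma$ alone; combined with $\pi_*\beta_Y=\beta$, Proposition~\ref{Prop:bound-class} yields only finitely many possibilities for $\beta_Y$.

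Putting these together, the set of numerical data $\Gamma_Y$ on $Y$ whose stabilization is $\Gamma$ is finite; call them $\Gamma_1,\ldots,\Gamma_r$. Every stable logarithmic map in $\ocM_{\Gamma}(X)$ admits a lift to $\ocM(Y)$ by Proposition~\ref{Prop:surjective}, and any such lift has numerical datum among the $\Gamma_i$. Hence the induced morphism $\coprod_{i}\ocM_{\Gamma_i}(Y)\to\ocM_\Gamma(X)$ is surjective, as claimed. The only subtle point in this plan is ensuring that the combinatorial invariants of the lifted map really are controlled by the three propositions cited; the finiteness of $\beta_Y$ is the main obstacle, but Propositions~\ref{Prop:deg-constant} and~\ref{Prop:bound-class} are designed for exactly this purpose, so the argument reduces to assembling the bookkeeping above.
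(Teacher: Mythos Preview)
Your proposal is correct and follows exactly the approach of the paper: the paper derives Proposition~\ref{Prop:discrete-surjective} directly from Propositions~\ref{Prop:surjective}, \ref{Prop:component-bounded}, \ref{Prop:deg-constant}, and~\ref{Prop:bound-class}, bounding the genus and number of markings trivially, the contact data via the bijection on $\pi_0(\wedge Y)\to\pi_0(\wedge X)$, and the curve class $\beta_Y$ by fixing its pushforward and its degree against $L_Y$. Your write-up simply spells out this bookkeeping in more detail than the paper's one-line deduction.
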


\noindent These propositions together provide our main theorem.

\begin{proof}[Proof of Theorem \ref{Th:main}]
By Proposition \ref{Prop:resolution} we have $\oM_Y$ is globally generated, so each $\ocM_{\Gamma_i}(Y)$ is proper by either \cite[Proposition~5.8]{AC} or \cite[Theorem~0.2]{GS}. We rely on the properties enumerated in Section \ref{Sec:statement}. The stack  $\ocM_\Gamma(X)$ is algebraic, locally of finite type over $\CC$,  and separated. Since the image of a proper algebraic stack in a separated algebraic stack is proper, Proposition \ref{Prop:discrete-surjective}  implies that $\ocM_\Gamma(X)$ is proper. The map $\ocM_\Gamma(X) \to\ocM_\uGamma(\uX)$ is representable by Theorem \ref{Th:oldproperties}.
\end{proof}

\section{The stack $\ocM_\Gamma(X)$}\label{Sec:review}
Let $X$ be a logarithmic scheme that is projective over \change{$\CC$} and let $S$ be another logarithmic scheme over \change{$\CC$}. A {\em prestable logarithmic map over $S$ with target $X$} consists of a  logarithmic curve $C \to S$ in the sense of \cite{FKato, Olsson}, along with a logarithmic morphism $C \to X$. It is customary to indicate such a map by $C \to X$, suppressing the remaining data from the notation. A prestable logarithmic map $C \to X$ is {\em stable} if the map $\uC \to \uX$ of underlying schemes is Kontsevich stable.

There are at least three distinct ways to create a groupoid of stable or prestable  logarithmic maps. First, note that given a prestable logarithmic map $C \to X$ over $S$, its pullback  along a logarithmic morphism $S'\to S$ is a logarithmic map $C'\to X$ over $S'$. This defines a groupoid over the category of logarithmic schemes, which we denote temporarily by $\fL(X)$. Second, one can consider only {\em strict} arrows $S'\to S$, namely arrows obtained by pullback along $\uS' \to \uS$. This forms a groupoid over the category of {\em schemes}, by sending a prestable logarithmic map $C \to X$ over $S$ to the scheme $\uS$. We denote this groupoid temporarily by $\fL^{str}(X)$. A key result is the following:

\begin{theorem}[\protect{See \cite[Theorem 2.4]{GS}, \cite[Theorem 2.1.10]{Chen}, \cite[Corollary~1.1.2]{Wise}}]
The groupoid $\fL^{str}(X)$ is an algebraic stack, locally of finite type over~\change{$\CC$}.
\end{theorem}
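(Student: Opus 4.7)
The plan is to exhibit $\fL^{str}(X)$ as algebraic over an already-algebraic target via a forgetful morphism whose fibers are logarithmic $\Hom$ functors. Let $\mathfrak{M}^{\log}$ denote the stack over schemes sending $\uS$ to the groupoid of pairs (fs log structure on $S$, prestable log curve $C/S$). There is an obvious forgetful morphism $\fL^{str}(X) \to \mathfrak{M}^{\log}$ sending $(C \to X)/S$ to $(C/S)$. The target $\mathfrak{M}^{\log}$ is itself algebraic and locally of finite type over $k$: by the F.~Kato--Olsson description (\cite{FKato,Olsson}) it is an open substack of $\fM \times_k \Log$, where $\fM$ is the classical stack of prestable curves, and $\Log$ is algebraic and locally of finite presentation by Olsson's main theorem in \cite{Olsson_log}.

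It then suffices to show that the forgetful morphism $\fL^{str}(X) \to \mathfrak{M}^{\log}$ is representable by algebraic stacks locally of finite type. Given a prestable log curve $C/S$, the fiber, viewed as a functor on strict $\uS$-schemes, sends $\uS' \to \uS$ to the set of log morphisms $C_{S'} \to X$ over $k$; this is precisely the relative logarithmic $\Hom$ functor $\underline{\Hom}^{\log}_S(C, X)$. The plan is to invoke the general representability theorem for logarithmic $\Hom$ stacks established in \cite{Wise}. The hypotheses of that theorem hold here: $C \to S$ is proper, flat, and finitely presented (as follows from log prestability of $C$), and $X$ is proper of finite type over $k$ with fine saturated log structure (our standing assumption). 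This yields algebraicity and local finiteness of the fiber.

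Combining the two inputs, $\fL^{str}(X)$ is algebraic and locally of finite type as a tower of algebraic, locally finitely presented morphisms. The chief obstacle to the whole argument is the log $\Hom$ representability theorem itself. Its proof proceeds via Artin's criteria, and the subtle points are the construction and analysis of a well-behaved cotangent complex in the logarithmic category (which governs the deformation theory of log morphisms) together with the verification of effective formal versality when the characteristic monoid can jump under specialization. The fine saturated hypothesis is essential here, guaranteeing that the relevant monoid constructions are compatible with limits and étale localization, so that formal lifts glue to honest deformations. Everything else in the argument---the algebraicity of $\mathfrak{M}^{\log}$ and the formal combination yielding $\fL^{str}(X)$---is standard once this representability is in hand.
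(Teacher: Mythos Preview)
The paper does not supply its own proof of this statement; it simply cites \cite{GS}, \cite{Chen}, and \cite{Wise}. Your outline is essentially the strategy of \cite{Wise}, so in that sense you are on the right track and aligned with one of the cited sources.

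There is one inaccuracy worth flagging. Your stack $\mathfrak{M}^{\log}$ is \emph{not} an open substack of $\fM \times_k \Log$. A point of $\fM \times_k \Log$ over $\uS$ is a prestable curve $\uC/\uS$ together with an arbitrary fs log structure $M_S$ on $\uS$; there is no compatibility imposed, and such a pair need not underlie any log curve. What the F.~Kato--Olsson description actually says is that log curves over $(S,M_S)$ correspond to morphisms of log schemes $(S,M_S)\to(\fM,M_{\fM})$, where $M_{\fM}$ is the canonical divisorial log structure on $\fM$. Hence $\mathfrak{M}^{\log}$ is Olsson's stack $\Log_{\fM}$ (parametrizing fs log structures on the base receiving a map from the pullback of $M_{\fM}$), which is algebraic and locally of finite type over $\fM$ by \cite{Olsson_log}. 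This repairs the first step; the rest of your argument---identifying the fibre of the forgetful map as a relative logarithmic $\Hom$ stack and invoking the representability theorem of \cite{Wise}---is correct as stated.
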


The stack $\fL^{str}(X)$ has a canonical logarithmic structure, since an object $C \to X$ over $S  = (\uS, M_S)$ defines a logarithmic structure on $\uS$.

The stack $\fL^{str}(X)$ is rather large, because it includes all possible choices of logarithmic structures $S$ on $\uS$, and fails to be proper. A better behaved substack of {\em minimal prestable logarithmic maps} is defined  in \cite{Chen, AC} when the characteristic sheaf $\oM_X$ is globally generated, in \cite{GS} when $X$ is a Zariski logarithmic scheme, and in \cite{Wise} in general. It is denoted $\ufM(X)$. It obtains a canonical logarithmic structure by restriction from $\fL^{str}(X)$, and we denote by $\fM(X)$ this stack with its logarithmic structure; in particular $\fM(X)$ can be viewed as a groupoid over the category of logarithmic schemes. \change{It has} the following key properties:

\begin{theorem}[{\cite[Corollary~1.1.2]{Wise}}]\label{Th:Wise} 
\begin{enumerate} \item The stack $\ufM(X)$ is an open substack of  $\fL^{str}(X)$.  In particular it is algebraic and locally of finite type over \change{$\CC$}. 
\item We have an isomorphism $\fM(X) \simeq \fL(X)$ of groupoids over the category of logarithmic schemes.
\item The morphism $\ufM(X)\to \ufM(\uX)$ is representable by algebraic spaces.
\end{enumerate}
\end{theorem}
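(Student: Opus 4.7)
The plan is to prove all three parts by first establishing a universal property for minimality, from which openness and representability will follow almost formally. The key construction attaches to each geometric point $s$ of the base of a prestable logarithmic map $f\colon C\to X$ a combinatorial datum consisting of: contact orders at the marked points over $s$, the pairs of branch weights at each node $p\in C_s$ together with the local sections of $\oM_{X,f(p)}$ that they must match, and the composition maps $\oM_{X,f(\eta)}\to \oM_{S,s}$ at the generic points $\eta$ of components of $C_s$. These data assemble, after passing to the fine saturated sharp quotient, into a monoid $\oM^{\min}_{S,s}$ equipped with a canonical comparison homomorphism $\oM^{\min}_{S,s}\to\oM_{S,s}$ through which the original tropical data factor. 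The map $f$ is declared \emph{minimal at} $s$ exactly when this comparison is an isomorphism.

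For part (2), given any object $(C\to X)/S$ of $\fL(X)$, the stalks $\oM^{\min}_{S,s}$ glue into an \'etale sheaf $\oM^{\min}_S$ on $\uS$, and pulling back units from $M_S$ produces a logarithmic structure $M^{\min}_S$ on $\uS$ together with a morphism $(\uS,M_S)\to(\uS,M^{\min}_S)$. The original map then factors uniquely through an object over $(\uS, M^{\min}_S)$, which by construction lies in $\fM(X)$. Functoriality of this factorization in both strict and non-strict morphisms of log schemes furnishes the inverse to the tautological inclusion $\fM(X)\to\fL(X)$, yielding the claimed equivalence of groupoids.

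Part (1) is then immediate: the comparison map $\oM^{\min}\to\oM$ is defined universally over $\fL^{str}(X)$, and since both are constructible sheaves of finitely generated sharp monoids on an algebraic stack, the locus where the induced map of group completions is an isomorphism and where the saturated cones match is a finite intersection of open conditions. Hence $\ufM(X)\subset \fL^{str}(X)$ is open, and algebraicity and local finite type descend from $\fL^{str}(X)$. Part (3) reduces to the statement that a minimal logarithmic enhancement of a fixed underlying stable map has no non-trivial automorphisms: an automorphism of $(C\to X)/S$ fixing $(\uC\to\uX)/\uS$ induces an automorphism of $\oM^{\min}_{S,s}$ preserving the universal combinatorial data, which must be the identity, so the map $\ufM(X)\to \ufM(\uX)$ has trivial relative inertia and is representable by algebraic spaces.

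The main obstacle is the construction of $\oM^{\min}_{S,s}$ in the full generality required, where $\oM_X$ is neither globally generated nor Zariski. In the globally generated or Zariski setting one can describe the minimal monoid by explicit generators and relations indexed by global sections of $\oM_X$; in the general \'etale case, at each node one must instead work with the local stalk $\oM_{X,f(p)}$ and verify that the resulting fine saturated sharp quotient is independent of the choice of \'etale chart, behaves well under specialization, and patches to a coherent sheaf on $\uS$. Once this delicate combinatorial construction is in place (as carried out in \cite{Wise}), the remaining arguments for openness, equivalence of groupoids, and rigidity are essentially formal consequences of the universal property.
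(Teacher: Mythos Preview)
The paper does not give its own proof of this theorem: it is stated with the attribution \cite[Corollary~1.1.2]{Wise} and no argument, and the text immediately moves on (``This immediately implies Theorem~\ref{Th:oldproperties}''). There is therefore nothing in the present paper to compare your proposal against.

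That said, your sketch is a faithful outline of the strategy actually used in \cite{Wise} (and, in the restricted settings, in \cite{Chen,AC,GS}): one builds the minimal characteristic monoid $\oM^{\min}_{S,s}$ from the tropical data of the map at each geometric point, glues to a sheaf, and then openness, the equivalence $\fM(X)\simeq\fL(X)$, and representability over $\ufM(\uX)$ follow from the universal property. You also correctly identify the genuine content as the well-definedness and constructibility of $\oM^{\min}$ in the general \'etale (non-Zariski, non-globally-generated) case, which is precisely what \cite{Wise} supplies. So your proposal is not so much an independent proof as an accurate summary of the cited one; since the present paper defers the proof entirely to that reference, this is the appropriate level of detail.
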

This immediately implies Theorem \ref{Th:oldproperties}.

The second statement justifies naming $\fM(X)$ {\em the logarithmic stack of prestable logarithmic maps}. Concretely it says that every prestable logarithmic map $C\to X$ over a logarithmic scheme $S$ is canonically the pullback along a logarithmic morphism $S \to S^{\min}$ of a minimal prestable logarithmic map $C^{\min} \to X^{\min}$ over  $S^{\min}$,  and the underlying map of schemes $\uS \to \uS^{\min}$ is the identity. The first statement then tells us that the groupoid of prestable logarithmic maps with target $X$ is a logarithmic algebraic stack. 

For a prestable map $f:C\to X$ over $S$, we denote by $g$ the arithmetic genus of the fibers of $\uC \to \uS$, by $\beta$  the curve class $\uf_*[\uC]$, and by $c_i$, $i=1,\ldots, n$ the logarithmic numerical data (or {\em contact orders}) of $C\to X$ at the $n$ marked points, introduced in Section \ref{sec:intrologpts}.  These data are locally constant, so $\fM(X)$ breaks into open and closed substacks: $\fM(X) = \coprod_\Gamma \fM_\Gamma(X)$, with $\Gamma = (g,\{c_i\}, \beta)$.

To each prestable logarithmic map $C \to X$ we have an associated map $\uC \to \uX$ of underlying schemes, giving a morphism $\fM(X) \to \fM(\uX)$. This restricts to morphisms $ \fM_\Gamma(X) \to  \fM_\uGamma(\uX)$, where $\uGamma = (g,n,\beta)$. 

Finally we denote by $\ocM(X)$ the open substack of stable logarithmic maps, which again decomposes as $\ocM(X) = \coprod_\Gamma \ocM_\Gamma(X)$.  By definition, the morphism $\fM(X) \to \fM(\uX)$ restricts to $\ocM(X) \to \ocM(\uX)$, and this again decomposes into morphisms $\ocM_\Gamma(X) \to \ocM_\uGamma(\uX)$.

\section{Artin fans}

We extend the construction of \cite{AWnew} to logarithmic schemes which are not logarithmically smooth.

\subsection{The category of Artin fans} \label{sec_cat_of_art}

\change{An \emph{Artin cone} is a logarithmic algebraic stack isomorphic to the quotient of an affine toric variety by its dense torus.  An \emph{Artin fan} is a logarithmic algebraic stack that has a cover by strict, representable, \'etale maps from Artin cones.}
An Artin fan whose tautological morphism to $\Log$ (which is necessarily \'etale) is representable will be said to have \emph{faithful monodromy}.  Logarithmic morphisms between Artin fans are always logarithmically \'etale (see \cite[Lemma~A.7]{AMW}).

It was shown in \cite[Section~2]{AWnew} that a logarithmically smooth scheme $X$ admits an initial factorization of the map $X \rightarrow \Log$ through a representable, \'etale morphism $\cX \rightarrow \Log$.  This stack $\cX$ is called the Artin fan of $X$.  In fact, \change{any} logarithmic scheme with locally connected logarithmic strata admits an Artin fan, as Proposition~\ref{prop:Artin-fan-existence} will show below.

By construction, the Artin fan of a logarithmic scheme always has faithful monodromy.  However, logarithmic modifications (Definition~\ref{def:log-modification}) of Artin fans with faithful monodromy do not necessarily have faithful monodromy.  For this reason, we do not impose a requirement of faithful monodromy in our definition.

If $\sigma$ is a fine, saturated, sharp monoid \change{(i.e., a rational polyhedral cone)} then define $\cA_\sigma$ to be the Artin 
\change{cone $[V/T]$ where $V$ is the affine toric variety associated to $\sigma$ and $T$ is its dense torus.  By \cite[Proposition~5.17]{Olsson_log}, $\cA_\sigma$}
represents the functor
\begin{equation*}
X \mapsto \Hom(\sigma^\vee, \Gamma(X, \oM_X))
\end{equation*}
on logarithmic schemes.  
We write $\cA$ for the Artin cone $\cA_{\NN}$.

Olsson showed that $\Log$ has an \'etale cover by Artin cones \cite[Corollary~5.25, Remark~5.26]{Olsson_log}.\footnote{Note that op.\ cit.\ uses different notation:  $\mathcal{S}_P = \mathcal{A}_\sigma$ where $\sigma = P^\vee$.}  It was shown furthermore in \cite[Corollary~2.2.8]{AWnew} that Artin cones have no nontrivial \change{representable} \'etale covers.  This implies 
that every \change{strict} \'etale map of Artin cones is an open embedding $\cA_\tau \subset \cA_\sigma$ associated to an inclusion of a face $\tau \subset \sigma$
.  As a fiber product of Artin fans is an Artin fan, we conclude that \emph{all Artin fans can be constructed by gluing Artin cones along inclusions of faces}.  More precisely, every Artin fan is a colimit of a diagram in which all morphisms are inclusions of faces.  Note that 
automorphisms \change{are considered inclusions of faces here}, so that Artin fans can be constructed by gluing Artin cones to themselves in nontrivial ways.

Thus Artin fans are essentially combinatorial objects.    In the next section, we give an intuitive guide to the relationship between geometry and combinatorics.  
\change{A precise formulation of the combinatorial nature of Artin fans is given in \cite[Theorem~6.11]{tropmgnnew}.}

\subsubsection{Intuitive picture: the generalized cone complex of an Artin fan} Since Artin fans are 0-dimensional Artin stacks, they are hard to picture. \change{Their relationship with fans (of toric geometry) and cone complexes (of toroidal geometry)
\cite{KKMS}  may be helpful.  
Here we try only to give enough of an idea to motivate the arguments that follow.  For a more detailed discussion, see \cite{logsurvey} or \cite{tropmgnnew} (which are inspired by \cite[Appendix~B]{GS} and \cite{Ulirsch}).}

Given a fine, saturated sharp monoid $\sigma$, it is natural to depict it as a lattice inside the real cone $$\sigma_\RR := \mathrm{conv}(\sigma) \subset \sigma^\gp\otimes_\ZZ\RR.$$ For instance $\NN$ is depicted as the lattice of non-negative integers inside $\RR_{\geq 0}$.

To $\sigma$ we associated  an Artin cone $\cA_\sigma$. We can recover $\sigma$ by the formula $\sigma = \Hom (\NN, \sigma) = \Hom(\cA, \cA_\sigma)$. Since an Artin fan $\cX$ is obtained by gluing Artin cones $\cA_\sigma$ through open embeddings corresponding to face maps $\cA_\tau \to \cA_\sigma$, it is natural to use  the piecewise linear topological space $\Sigma_\cX$, obtained by gluing the cones $\sigma_\RR$, as a concrete avatar of $\cX$.  Note that $\Sigma_{\cX}$ includes not only the cones $\sigma_{\RR}$, but also their lattices $\sigma$, as part of its structure.  When $\cX = [X/T]$, the quotient of the toric variety $X$ of a fan $\Sigma$ by its torus $T$, then $\Sigma_\cX$ is simply the fan $\Sigma$. 

When self-gluing maps are allowed, one does not quite get a complex, but rather a generalized cone complex
\cite{ACP}\change{, or a cone stack~\cite{tropmgnnew}}. The generalized cone complex does not faithfully depict all the subtleties of an Artin fan: see Examples \ref{ex:non-unique-lift} and \ref{ex:non-functoriality}. Nevertheless, the cone complex continues to provide valuable intuition when working with Artin fans.

\subsection{The Artin fan of a logarithmic scheme}
\label{sec:Artin-fans}

\begin{proposition} \label{prop:Artin-fan-existence}
Let $X$ be a logarithmic algebraic stack\footnote{In this paper we will only have use for the case where $X$ is a scheme, but we anticipate it will sometimes be useful to speak of the `Artin fan of an Artin fan'---that is, the universal factorization of the structural morphism $\cX \rightarrow \Log$ through a representable \'etale morphism.  The need for such constructions seems to arise because of the failure of naturality of the construction in this proposition (see Example~\ref{ex:non-functoriality}).}
whose logarithmic strata are locally connected in the smooth topology.  Then there is an initial factorization of the map $X \rightarrow \Log$ through an \'etale morphism $\cX \rightarrow \Log$ that is representable by algebraic spaces.
\end{proposition}
\begin{proof}

Consider the category $\fU'$ consisting of all representable, smooth, and strict morphisms of logarithmic algebraic stacks $U \to X$ with morphisms given by representable, smooth, and strict morphisms of log\change{arithmic} algebraic spaces over $X$. Let $\fU \subset \fU'$ be the subcategory consisting of objects $U \to X$ such that the initial factorization $U \to \cU\to \Log$ of the tautological morphism $U \to \Log$ through an  \'etale morphism $\cU \rightarrow \Log$ that is representable by algebraic spaces  exists.  We aim to show $\fU = \fU'$.

We observe first that $\fU$ is closed under colimits taken in $\fU'$.
Consider a collection of arrows $\{\phi_{ij}:U_i \to U_j\}_{i,j\in \Lambda}$ in $\fU$ with $\Lambda$ a partially ordered set. Assume the colimit $\phi: U = \varinjlim U_i \to X$ exists in $\fU'$. We claim that the colimit $\phi: U \to X$ in fact lies in $\fU$. To see this, let $\cU_i \to \Log$ be the initial factorization of $U_i \to \Log$. The morphisms $\phi_{ij}$ induce morphisms $\psi_{ij}: \cU_{i} \to \cU_{j}$. Let $\cU$ be the colimit of $\{\psi_{ij}\}$ in the category of \'etale sheaves over $\Log$.  We use the symbol $\cU$ also to refer to the espace \'etal\'e of this sheaf, which is an algebraic stack with an \'etale projection to $\Log$ that is representable by algebraic spaces (see~\cite[Theorem~V.1.5]{Milne} for a construction of the espace \'etal\'e.)  Furthermore, the morphism $\cU \to \Log$ serves as the initial representable, \'etale factorization of $U\to \Log$ by the universal property of the colimit.


It is therefore sufficient to show that any geometric point $x$ of $X$ has a neighborhood  in the smooth topology for which the desired factorization exists.  Passing to a smooth-topology neighborhood of $x$ we may assume that $x$ is a member of the closed stratum of $X$, that the closed stratum of $X$ is connected, \change{that the closed stratum is contained in the closure of every connected component of every stratum}, and that 
\begin{equation} \label{eqn:2}
\Gamma(X, \oM_X) \rightarrow \Gamma(x, \oM_X)
\end{equation}
is bijective.  We write $\sigma = \Gamma(X, \oM_X)^\vee$.  We will show that in this situation $\cA_{\sigma}$, with the map $f : X \rightarrow \cA_\sigma$ associated to the bijection $\sigma^\vee \rightarrow \Gamma(X, \oM_X)$, satisfies the required universal property.

Consider a map $g : X \rightarrow \cX$ where $\cX \rightarrow \Log$ is \'etale and representable by algebraic spaces.  We wish to construct a unique map $s : \cA_\sigma \rightarrow \cX$ making the diagram
\begin{equation} \label{eqn:1} \vcenter{\xymatrix{
X \ar[r]^g \ar[d]_f & \cX \ar[d] \\
\cA_\sigma \ar@{-->}[ur]^s \ar[r] & \Log
}} \end{equation}
commute.  Set $\cX' = \cX \fpr_{\Log} \cA_\sigma$.  We observe first that diagram~\eqref{eqn:1} has a unique lift when $X$ is replaced by $x$:
by \cite[\change{Corollary~2.2.8}]{AWnew}, the map 
\begin{equation*}
\Gamma(\cA_\sigma, \cX) \rightarrow \Gamma(x, \cX)
\end{equation*}
is a bijection.   This provides the map $s$ and proves that it is unique; all that is left is to verify that diagram~\eqref{eqn:1} commutes, i.e., that $sf = g$.  

Consider both $sf$ and $g$ as sections of $Z = \cX \fpr_{\Log} X$.  By construction, $sf$ and $g$ agree at $x$.  Because $Z$ is pulled back from the \'etale map $\cX \rightarrow \Log$, it is locally constant on logarithmic strata.  It follows that the locus where $sf$ and $g$ agree is a union of \change{connected components of} strata.  By assumption, the closed stratum of $X$ is connected, so $sf$ and $g$ agree on the closed stratum of $\cX$.  But $Z$ is also \'etale over $X$, and in particular unramified, so the locus in $X$ where $sf$ and $g$ agree is open.  Thus $sf$ and $g$ agree on an open union of strata that contains $x$---that is, they agree on all of $X$.
\end{proof}

\change{We recall the following proposition from \cite[Proposition~2.3.11]{AWnew}:}

\begin{proposition} \label{prop:Artin-fan-minimal}
Let $\cX$ be an Artin fan and let $f : \cA_\sigma \rightarrow \cX$ be a morphism of Artin fans.  Then there is a  factorization of $f$ through a \emph{strict} morphism $\cA_\tau \rightarrow \cX$, which is \change{minimal} with respect to open embeddings.  
\change{The morphism $\cA_\tau \rightarrow \cX$} is unique, up to an $\cX$-isomorphism which is not necessarily unique.  That is, if there is another such factorization through $\cA_{\tau'}$ \change{then there is a isomorphism between $\cA_\tau$ and $\cA_{\tau'}$ over $\cX$, as shown in the commutative diagram:}
\begin{equation} \label{eqn:6} \vcenter{ \xymatrix{
& \cA_{\tau'} \ar[d] \\
\cA_{\tau} \ar[ur]^*!/dr6pt/{\rotatebox{45}{$\scriptstyle\sim$}} \ar[r] & \cX 
}} \end{equation}
\end{proposition}
\change{
\begin{proof}
It is shown in \cite[Proposition~2.3.11]{AWnew} that the triple $(\cA_\tau, p, s)$, where $\cA_\sigma \xrightarrow{s} \cA_\tau \xrightarrow{p} \cX$ is a factorization of $\cA_\sigma \rightarrow \cX$ through a strict morphism $p$, is unique up to \emph{unique} isomorphism.  It follows, therefore, that the pair $(\cA_\tau, p)$ is unique up to isomorphism (and the isomorphisms are in bijection with the choices of $s : \cA_\sigma \rightarrow \cA_\tau$ lifting $\cA_\sigma \rightarrow \cX$ along $p$).
\end{proof}
}

\begin{remark}
In the theory of Kato fans \cite[Sections 9 and 10]{Kato-toric}, \change{namely when self gluing is not allowed,} the lift in diagram~\eqref{eqn:6} is unique.  
\change{In view of \cite[Proposition~2.3.11]{AWnew}, this is because when $\cX$ is a Kato fan, once $\cA_\tau \rightarrow \cX$ has been specified, there is a unique lift of $\cA_\sigma \rightarrow \cX$ to $\cA_\tau$.}
\end{remark}

\begin{example} \label{ex:non-unique-lift}
We give an example involving a cone glued to itself in which the map $\cA_\tau \rightarrow \cX$ in the Proposition is not unique.  Let $\cA^{[2]}$ be the image in $\Log$ of the \'etale map $\cA^2 \rightarrow \Log$.  If we regard $\cA^2$ as the moduli space of logarithmic structures with a global chart by $\mathbb{N}^2$ then we may interpret $\cA^{[2]}$ as the moduli space of logarithmic structures that admit a chart by $\mathbb{N}^2$ \emph{\'etale-locally}.  This arises as the Artin fan of the punctured Whitney umbrella (see Example~\ref{ex:non-functoriality}).

The diagonal gives a \emph{non-strict} map $\cA \rightarrow \cA^2$, which is the minimal factorization of the composition $\cA \rightarrow \cA^2 \rightarrow \cA^{[2]}$.  \change{We note that there are \emph{two} ways to complete the following diagram:
\begin{equation*} \xymatrix{
& \cA^2 \ar[d] \\
\cA^2 \ar@{-->}[ur] \ar[r] & \cA^{[2]}
} \end{equation*}
One may take the diagonal arrow to be either of the two automorphisms of $\cA^2$.}
Note that the generalized cone complex of $\cA^{[2]}$ is simply the quotient cone $(\RR_{\geq 0})^2 / (\ZZ/2\ZZ)$, and the diagram corresponds to the involution of the cone $(\RR_{\geq 0})^2$.

\change{However, these automorphisms induce distinct commutative squares}
\begin{equation*} \xymatrix{
\cA \ar[r]^{\change{\Delta}} \ar[d]_{\change{\Delta}} & \cA^2 \ar[d] \\
\cA^2 \ar[ur] \ar[r] & \cA^{[2]}
} \end{equation*}
\change{because to specify such a square involves the choice of an automorphism of the composition $\cA \xrightarrow{\Delta} \cA^2 \rightarrow \cA^{[2]}$.}
\end{example}

\subsection{A substitute for functoriality of Artin fans}
\label{sec:functoriality}

While Artin fans are functorial with respect to strict morphism of logarithmic schemes, they are not functorial with respect to general logarithmic morphisms.  In this section we adapt the construction of Section~\ref{sec:Artin-fans} to achieve a weak form of functoriality that will be suitable for our application in Proposition~\ref{prop:alteration-lift}.

\begin{example} \label{ex:non-functoriality}
We show that our construction of the morphism from a logarithmic scheme to its Artin fan cannot be natural.  This example is recounted in greater detail in \cite{logsurvey}.

We work over an algebraically closed field of characteristic other than $2$.  The \emph{punctured Whitney umbrella} $X$ is the quotient of $Y = \Gm \times \AA^2$ by the involution exchanging $(t,x,y)$ and $(-t,y,x)$.  We equip $Y$ with the logarithmic structure pulled back from $\AA^2$, which descends to a logarithmic structure on $X$.  The Artin fan of $Y$ is $\cY = \cA^2$ and the Artin fan of $X$ is $\cX = \cA^{[2]}$ (see Example~\ref{ex:non-unique-lift} for the notation).

Let $\tY$ be the blowup of $Y$ along $\Gm \times \{ 0 \}$ and let $\tX$ be the corresponding blowup of $X$.  The Artin fan of $\tY$ is the blowup $\tcY$ of $\cA^2$ at its origin, or, more explicitly, the quotient of the blowup of $\AA^2$ by its dense torus.  The Artin fan of $\tX$ is the quotient $\tcX$ of $\tY$ by the involution exchanging the coordinates as a \emph{representable}, \'etale algebraic stack over $\Log$.  Even though the involution stabilizes the exceptional divisor of $\tY$, the corresponding divisor of the quotient has no additional stabilizer because the map from the Artin fan to $\Log$ must be representable.  This is the reason functoriality fails.

One can now show that there is no dashed arrow completing the diagram below and making it commute:
\begin{equation*} \xymatrix{
\tX \ar[r] \ar[d] & \tcX \ar@{-->}[d] \\
X \ar[r] & \cX
} \end{equation*}
Indeed, there is a loop in the exceptional divisor of $\tX$ that projects to a loop in $X$ around which the characteristic monoid of the logarithmic structure of $X$ has monodromy.  Its image in $\cX$ is therefore gives a nontrivial element of the stabilizer of the closed point of $\cX$.  However, the logarithmic structure of $\tX$ does not have monodromy around this loop since it has rank~$1$ and the characteristic monoid of a rank~$1$ logarithmic structure cannot have monodromy.  Therefore this loop projects to the trivial automorphism in the stabilizer group of $\tcX$.
\end{example}

Let $X$ be a scheme equipped with a \emph{morphism} of logarithmic structures $M'_X \rightarrow M_X$.  Let $\Log_{\Delta^1}$ be the universal example of an algebraic stack with these data \cite[Theorem~2.4]{Olsson-log-cc}, so that there is a tautological map $X \rightarrow \Log_{\Delta^1}$.  We show that there is an initial factorization of this map through a representable \'etale map $\cX \rightarrow \Log_{\Delta^1}$:

\begin{proposition} \label{prop:Artin-fan-existence-2}
Let $X$ be an algebraic stack equipped with a morphism of logarithmic structures such that the logarithmic strata\footnote{By \change{the logarithmic strata} we mean the strata in the coarsest stratification over which the characteristic monoids of both logarithmic structures are locally constant.} are locally connected in the smooth topology.  The corresponding map $X \rightarrow \Log_{\Delta^1}$ admits an initial factorization through a representable \'etale map $\cX \rightarrow \Log_{\Delta^1}$.
\end{proposition}
\begin{proof}
The structure of the proof is essentially the same as that of Proposition~\ref{prop:Artin-fan-existence}, so we omit some details.

We begin by noting that the collection of all smooth $Y \rightarrow X$ such that $Y \rightarrow \Log_{\Delta^1}$ has an initial factorization through a representable, \'etale map $\cY \rightarrow \Log_{\Delta^1}$ is closed under colimits.  As the universal property characterizing this factorization respects colimits, it will be sufficient to work smooth-locally in $X$.  We may therefore assume that there is a closed geometric point $x$ of $X$ for which the maps
\begin{gather*}
\Gamma(X, \oM_X) \rightarrow \Gamma(x, \oM_X) \\
\Gamma(X, \oM'_X) \rightarrow \Gamma(x, \oM'_X)
\end{gather*}
are bijections.  Set $\sigma = \Gamma(X, \oM_X)^\vee$ and $\tau = \Gamma(X, \oM'_X)^\vee$.  The map $\sigma \rightarrow \tau$ induces a map $\varphi : \cA_\sigma \rightarrow \cA_\tau$ and moreover gives a map $\cA_\sigma \rightarrow \Log_{\Delta^1}$.  In order to emphasize the map to $\Log_{\Delta^1}$, we write $\cA_{\sigma \rightarrow \tau} \simeq \cA_\sigma$ here.  

\begin{lemma}
The map $\cA_{\sigma \rightarrow \tau} \rightarrow \Log_{\Delta^1}$ is \'etale \change{and representable,} and the collection of all such maps is an \'etale cover of $\Log_{\Delta^1}$.
\end{lemma}
\begin{proof}
\change{To see that $\cA_{\sigma\rightarrow\tau} \rightarrow \Log_{\Delta^1}$ representable, we interpret a map $S \rightarrow \Log_{\Delta^1}$ as a morphism of logarithmic structures $M'_S \rightarrow M_S$ on $S$.  The lifts to a map $S \rightarrow \cA_{\sigma \rightarrow \tau}$ correspond to commutative diagrams~\eqref{eqn:4} 
\begin{equation} \label{eqn:4} \vcenter{\xymatrix{
\tau^\vee \ar[r] \ar[d] & \oM'_{S} \ar[d] \\
\sigma^\vee \ar[r]  & \oM_{S}
}} \end{equation}
that lift locally to charts.  These are clearly indexed by a set (with no nontrivial automorphisms).
}

The morphism is \'etale if and only if it is locally of finite presentation and formally \'etale.  It is locally of finite presentation because both source and target are locally of finite presentation over \change{$\CC$}.  To verify the infinitesimal lifting property, consider a diagram
\begin{equation} \label{eqn:5} \vcenter{\xymatrix{
S \ar[r] \ar[d] & \cA_{\sigma \rightarrow \tau} \ar[d] \\
S' \ar[r] \ar@{-->}[ur] & \Log_{\Delta^1} 
}} \end{equation}
in which $S'$ is an infinitesimal extension of $S$.  The map $S' \rightarrow \Log_{\Delta^1}$ gives a morphism of logarithmic structures $M'_{S'} \rightarrow M_{S'}$ on $S'$.  The commutativity of the square induces a commutative square~\eqref{eqn:4}
where the vertical arrow on the right is the restriction of the map of characteristic monoids $\oM'_{S'} \rightarrow \oM_{S'}$ to $S$ and the horizontal arrows are charts.  But \change{$S$ and $S'$ have identitical \'etale sites} 
and under this identification $\oM_S = \oM_{S'}$ and $\oM'_S = \oM'_{S'}$.  With these substitutions,~\eqref{eqn:4} \change{can be lifted, \'etale-locally, to a chart because the maps $M_S \rightarrow \oM_S$ and $M'_S \rightarrow \oM'_S$ are surjections of \'etale sheaves.  This}
gives the diagonal arrow lifting~\eqref{eqn:5} and shows it is unique.

The assertion that the $\cA_{\sigma \rightarrow \tau}$ cover $\Log_{\Delta^1}$ translates into the following familiar facts
\begin{enumerate}[label=\arabic{*})]
\item the characteristic monoid of a fine, saturated logarithmic structure possesses a chart locally, and
\item a morphism of fine, saturated logarithmic structures with charts by $\sigma^\vee$ and $\tau^\vee$ may be induced locally from a morphism $\sigma \rightarrow \tau$.
\end{enumerate}
\end{proof}

Returning to the proof of Proposition~\ref{prop:Artin-fan-existence-2}, our reduction guarantees we have a morphism $X \rightarrow \cA_{\sigma \rightarrow \tau}$  over $\Log_{\Delta^1}$.  To see that $\cA_{\sigma \rightarrow \tau}$ 
\change{is the initial factorization of $\cX \rightarrow \Log_{\Delta^1}$ through a strict, \'etale, representable morphism}
we repeat the argument of Proposition~\ref{prop:Artin-fan-existence}.  We consider a commutative diagram
\begin{equation*} \xymatrix{
X \ar[r] \ar[d] & \cX \ar[d] \\
\cA_{\sigma \rightarrow \tau} \ar@{-->}[ur] \ar[r] & \Log_{\Delta^1}
} \end{equation*}
in which $\cX$ is \change{strict, \'etale, and representable} over $\Log_{\Delta^1}$.  Replacing $\cX$ with $\cX \fpr_{\Log_{\Delta^1}} \cA_{\sigma \rightarrow \tau}$, we immediately reduce to the case where there is a map $\cX \rightarrow \cA_{\sigma \rightarrow \tau}$ that is compatible with the rest of the diagram, and the problem is to show there is exactly one section of this map making the rest of the diagram commute.  By assumption, a unique section exists at the geometric point $x$ of $\cA_{\sigma \rightarrow \tau}$.  By \cite[\change{Corollary~2.2.8}]{AWnew}
such a section extends uniquely to a section over $\cA_{\sigma \rightarrow \tau} \simeq \cA_\sigma$.  This completes the proof of Proposition~\ref{prop:Artin-fan-existence-2}.
\end{proof}

\begin{corollary} \label{cor:Artin-fan-functoriality}
Let $\change{Y \rightarrow X}$ be a morphism of logarithmic schemes.  Suppose that $\cX$ is the Artin fan of $X$ and $\cY$ is the Artin fan of $Y$ relative to $\Log_{\Delta^1}$.  Then there is a canonical morphism $\cY \rightarrow \cX$ making the diagram below commute:
\begin{equation*} \xymatrix{
Y \ar[r] \ar[d] & \cY \ar[d] \\
X \ar[r] & \cX
} \end{equation*}
\end{corollary}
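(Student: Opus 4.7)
The plan is to exploit the universal properties that define both $\cX$ and $\cY$ by linking them through a well-chosen intermediate stack. Let $s \colon \Log_{\Delta^1} \to \Log$ denote the source projection, which sends a morphism of logarithmic structures $M' \to M$ to $M'$. With the conventions of Proposition~\ref{prop:Artin-fan-existence-2} applied to $f \colon X \to Y$, the structural arrow $X \to \Log_{\Delta^1}$ classifies the morphism $f^\ast M_Y \to M_X$, so its composition with $s$ is the map $X \to \Log$ classifying $f^\ast M_Y$. By the functoriality of $\Log$, this composite factors as $X \xrightarrow{f} Y \to \Log$.

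Next I would base change $\cY$ along $s$, setting $\cY' \defeq \cY \fpr_{\Log, s} \Log_{\Delta^1}$. Because $\cY \to \Log$ is representable and \'etale (Proposition~\ref{prop:Artin-fan-existence}), the projection $\cY' \to \Log_{\Delta^1}$ inherits these properties under base change. The morphism $X \to Y \to \cY$, combined with the tautological $X \to \Log_{\Delta^1}$, then determines a canonical lift $X \to \cY'$ over $\Log_{\Delta^1}$.

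The universal property of $\cX$ as the initial representable \'etale factorization of $X \to \Log_{\Delta^1}$ (Proposition~\ref{prop:Artin-fan-existence-2}) now produces a unique morphism $\cX \to \cY'$ compatible with $X \to \cY'$.  Post-composing with $\cY' \to \cY$ yields the desired morphism $\cX \to \cY$. The commutativity of the square stated in the corollary follows at once: both composites $X \to \cX \to \cY$ and $X \to Y \to \cY$ agree by construction with the lift $X \to \cY'$ that was used to define $\cX \to \cY'$, via the projection $\cY' \to \cY$.

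There is no serious obstacle; the only item requiring attention is confirming that $\cY'$ is representable and \'etale over $\Log_{\Delta^1}$, which is immediate from the preservation of these properties under arbitrary base change. In effect, the ``relative'' construction of Proposition~\ref{prop:Artin-fan-existence-2} is exactly designed so that the non-strictness of $f$ is absorbed by the source projection from $\Log_{\Delta^1}$, turning what looks like a functoriality statement into a formal consequence of universal properties.
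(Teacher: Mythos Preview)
Your argument is correct and is precisely the deduction the paper leaves implicit (the corollary is stated without proof): pull back $\cY$ along the projection $\Log_{\Delta^1} \to \Log$ picking out the log structure $f^\ast M_Y$, then apply the universal property of $\cX$ from Proposition~\ref{prop:Artin-fan-existence-2} to the resulting representable \'etale map $\cY' \to \Log_{\Delta^1}$. The only detail worth noting is that the compatibility of $X \to \cY' \to \Log_{\Delta^1}$ (i.e., that $X \to Y \to \cY \to \Log$ agrees with $X \to \Log_{\Delta^1} \xrightarrow{s} \Log$) is exactly the observation you make in your first paragraph, so nothing is missing.
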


\section{Subdivisions}

The goal of this section is to show that essentially any logarithmic scheme has a projective logarithmic modification with globally generated characteristic monoid, see Theorem \ref{thm:subdiv}.  We begin by defining our terms.

\begin{definition}\label{def:log-modification}
\begin{enumerate}
\item A \emph{logarithmic alteration} of logarithmic Artin stacks is a proper, surjective, logarithmically \'etale morphism.

\item A \emph{logarithmic modifiction} of a \emph{logarithmically smooth} Artin stack is a proper, birational, logarithmically \'etale morphism.

\item More generally, a logarithmic alteration $Y \to X$ of logarithmic Artin stacks  is said to be a \emph{logarithmic modifiction}  if there is a logarithmic modification $\cY \to \cX$ of logarithmically smooth Artin stacks and a morphism $X \to \cX$ such that $Y = \cY\times_\cX X$, the product taken in the category of fs logarithmic stacks.
\end{enumerate}
\end{definition}

As the pullback of a logarithmic modification in the sense of \ref{def:log-modification}~(2) to a logarithmically smooth base is a  logarithmic modification  in the sense of \ref{def:log-modification}~(2), Definitions  \ref{def:log-modification}~(2) and  \ref{def:log-modification}~(3) are consistent.

\begin{remark}
\change{F.\ Kato has given a different definition of logarithmic modifications~\cite[Definition~3.14]{Kato-log-mod}.  It is immediate that representable logarithmic modifications in our sense are logarithmic modifications in Kato's sense, but we do not know if the converse holds.  It follows from \cite[Corollary~2.6.6]{AWnew} and Proposition~\ref{prop:subdiv-proper}, below, that the definitions coincide for representable logarithmic modifications of logarithmically smooth schemes.}
\end{remark}

Examples of logarithmic modifications appear in Sections \ref{sec:star} and~\ref{sec:barycentric} below. 


The pullback of a logarithmic alteration is a logarithmic alteration, and the pullback of a  logarithmic modification is a logarithmic modification.
A \emph{representable}  logarithmic modification of logarithmically smooth Artin stacks is a modification in the usual sense, but in general logarithmic modifications need not be representable: they include certain root stack constructions.  

\subsection{Subdivisions of Artin fans}

In \cite{Kato-toric} Kato described certain logarithmic modifications in terms of {\em subdivisions} of Kato fans, in analogy to subdivisions of fans of toric varieties, and we borrow the same analogy and 
\change{define subdivisions of Artin fans.}

\change{By definition} an Artin fan $\cX$ is covered by  strict \change{\'etale} maps $\cA_\sigma \rightarrow \cX$. 
An inclusion of faces $\sigma \subset \tau$ induces a strict open embedding $\cA_\sigma \subset \cA_\tau$, and the assignment $\sigma \mapsto \cA_\sigma$ respects intersections of faces. Therefore, given a fan in the sense of \cite[Section~1.4]{F} or \cite[Definition~3.1.2]{tv}, we may define an Artin fan $\cA_\Sigma$   by gluing together the $\cA_\sigma$ for $\sigma \in \Sigma$ according to the way they intersect inside of $\Sigma$. This permits us to give
\begin{definition}
A {\em subdivision} of an Artin fan $\cX$ is a morphism of Artin fans $\cY \rightarrow \cX$ whose base change via any map $\cA_\sigma \rightarrow \cX$ is isomorphic to $\cA_\Sigma$ for some subdivision $\Sigma$ of $\sigma$.
\end{definition}

Since the morphisms $\varphi : \cA_\sigma \rightarrow \cX$ cover $\cX$, we may construct a map $\cY \rightarrow \cX$ by constructing compatible maps $\cY_\varphi \rightarrow \cA_\sigma$.  The meaning of compatibility here is that the $\cY_\varphi$ should be stable under pullback via face maps $\cA_\tau \rightarrow \cA_\sigma$.  We use this idea to construct several refinements of~$\cX$.

 A subdivision of Artin fans corresponds to a subdivision of generalized cone complexes, so while   Artin fans (or Kato fans) and their subdivisions can be hard to  visualize, when one passes to generalized cone complexes one can actually draw a picture.

\begin{proposition} \label{prop:subdiv-proper}
A representable, birational morphism of Artin fans is proper if and only if it is a subdivision.
\end{proposition}
\begin{proof}
Let $\cY \to \cX$ be a  proper, birational and representable morphism of connected Artin fans. Since the statement is local on $\cX$, replacing $\cX$ by an \'etale local chart, we may assume that $\cX = \cA_{\sigma}$.  We then have a strict global quotient morphism $\AA_{\sigma} \to \cA_{\sigma}$, where $\AA_{\sigma}$ is the affine toric variety associated to $\sigma$ with the maximal torus $T$. We  obtain a $T$-equivariant morphism
\[
h: Y_{\sigma} \defeq \cY \times_{\cX}\AA_{\sigma} \to \AA_{\sigma}.
\]
Since $\cY \rightarrow \cX$ is birational, it is an isomorphism over the generic point, which pulls back to the dense torus in $\AA_\sigma$.
This implies that $Y_{\sigma}$ is toric as well. By the $T$-equivariance and properness of $h$, we deduce that $Y_{\sigma}$ is the toric variety obtained from a subdivision of $\sigma$. This finishes the proof.
\end{proof}

\subsection{Star subdivision}
\label{sec:star}

Let $\sigma$ be a fine, saturated, sharp monoid and $x \in \sigma$ an element.  For each face $\tau$ of $\sigma$ not containing $x$, let $\tau'$ be the saturated submonoid of $\sigma$ generated by $\tau$ and $x$.  The $\tau'$ and all of their faces constitute a fan, called the star subdivision of $\sigma$, and denoted $x\cdot \sigma$.

This construction is functorial with respect to inclusion of faces containing $x$.  That is, if $\sigma \subset \tau$ is the inclusion of a face containing $x$, then $x\cdot \sigma$  is canonically a subfan of $x\cdot \tau$.

We will generalize star subdivision to Artin fans by attempting to glue together star subdivisions of Artin cones.  If $\cA_\sigma \rightarrow \cX$ is an \'etale chart and $x \in \sigma$ is an element at which we would like to subdivide, we must require that the resulting subdivision be \change{compatible with different choices of chart $\cA_\sigma \to\cX$.}  This translates into the condition that $x$ be stable under monodromy.  In order to state things in a way that is intrinsic to Artin fans, we replace vectors \change{$x \in \sigma$} with \change{an} equivalent notion in the language of Artin fans.  

As introduced in the opening paragraphs of Section~\ref{sec_cat_of_art} we write $\cA=\cA_\NN$. The following definition is adapted from~\cite[Section~5.3]{Wlodarczyk}. 
\begin{definition}
Let $\cX$ be an Artin fan.  We will call a morphism of Artin fans $\change{x :} \cA \rightarrow \cX$ a \emph{vector} of $\cX$.  We call a vector $x$ of $\cX$ \emph{stable} if, \change{whenever $\cA_\sigma \rightarrow \cX$ is strict, there is at most one vector of $\cA_\sigma$ whose image is isomorphic to $x$.}
\end{definition}

Thus a vector of $\cX$ is simply a lattice point of the generalized cone complex $\Sigma_\cX$. The following are two examples \change{of}  vectors which are not stable arise.
\begin{example}\label{Ex:nodal} Consider the Artin fan $\cX$ associated to a surface with logarithmic structure given by an irreducible nodal curve with one node. The generalized cone complex is obtained by taking $(\RR_{\geq 0})^2$ and gluing together its two rays. The images of the vectors $(1,0)$ and $(0,1)$ are the same, so the factorization $\NN \to \NN^2 \to \Sigma_\cX$ is not unique. In fact a vector is stable if and only if it is not an image of $(a,0)$ (or $(0,a)$) for positive $a$.
\end{example}
\begin{example}\label{Ex:whitney}  Consider the Artin fan $\cA^{[2]}$ of the punctured whitney umbrella, see Example \ref{ex:non-functoriality}. The generalized cone complex is $(\RR_{\geq 0})^2 / (\ZZ/2\ZZ)$, so a vector is stable if and only if it is the image of a diagonal vector $(a,a) \in \NN^2$. 
\end{example}

Assuming $x$ is stable, we construct the star subdivision $\cX'$ as follows:  For any map $\varphi : \cA_\sigma \rightarrow \cX$, let 
\begin{equation*}
\cX'_\varphi = \begin{cases} \cA_\sigma & \text{$x$ does not lift to $\cA_\sigma$} \\ \cA_{x\cdot \sigma} & \text{$x$ lifts to $\cA_\sigma$} \end{cases}
\end{equation*}
where $\cA_{x\cdot\sigma}$ denotes the star subdivision of $\cA_\sigma$ with respect to the unique lift of $x$ to $\cA_\sigma$.  Since $x$ is stable, this construction is compatible with strict $\cX$-morphisms $\cA_\tau \rightarrow \cA_\sigma$, hence glues to give a global construction.

\begin{proposition}\label{prop:star-subdivision}
Star subdivision is projective.
\end{proposition}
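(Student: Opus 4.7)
The plan is to verify projectivity étale-locally on $\cX$ using classical toric geometry, and then to glue a relatively ample line bundle globally by appealing to the stability of $x$.

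First, the morphism $\cX' \to \cX$ is representable: base change along a strict chart $\varphi : \cA_\sigma \to \cX$ produces either the identity on $\cA_\sigma$ (when $x$ does not lift) or the canonical morphism $\cA_{x\cdot\sigma} \to \cA_\sigma$ induced by the toric star subdivision at the unique lift of $x$. Further pullback along the torus-torsor atlas $X_\sigma \to \cA_\sigma$ presents this as the toric birational morphism $X_{x\cdot\sigma} \to X_\sigma$, which is projective by a standard result in toric geometry: the star subdivision of a cone admits a strictly convex piecewise-linear support function, and the corresponding $T$-equivariant line bundle on $X_{x\cdot\sigma}$ is relatively ample over $X_\sigma$. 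Descending along the torsor yields a line bundle $L_\sigma$ on $\cA_{x\cdot\sigma}$ that is relatively ample for $\cA_{x\cdot\sigma} \to \cA_\sigma$, and the morphism is proper because properness descends along smooth covers.

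The remaining step is to glue the $L_\sigma$ to a global, relatively ample line bundle $L$ on $\cX'$ over $\cX$. This is precisely where the stability of $x$ enters. Stability guarantees that whenever $x$ factors through a strict chart $\cA_\sigma \to \cX$ the factorization is unique, hence the formation of $\cA_{x\cdot\sigma}$ together with its relatively ample line bundle is strictly functorial under face inclusions $\cA_\tau \hookrightarrow \cA_\sigma$. Consequently the local data $(\cA_{x\cdot\sigma}, L_\sigma)$ descends along the strict étale cover $\coprod_\sigma \cA_\sigma \to \cX$, producing the desired $L$ on $\cX'$. Since ampleness and properness are both étale-local on the base, $L$ is relatively ample and the morphism is proper, hence projective.

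The main obstacle is this globalization. Without the stability of $x$, distinct lifts of $x$ to a single chart could yield non-canonically isomorphic local star subdivisions and exceptional divisors, obstructing descent of the $L_\sigma$. With stability, strict functoriality is automatic, and the global construction reduces to routine gluing.
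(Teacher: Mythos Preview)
Your overall strategy---reduce to toric geometry on charts and globalize using stability of $x$---is the same as the paper's, but the gluing step has a gap. You assert that ``the formation of $\cA_{x\cdot\sigma}$ together with its relatively ample line bundle is strictly functorial under face inclusions,'' but you never specify \emph{which} relatively ample line bundle $L_\sigma$ you are using. A star subdivision admits many strictly convex support functions, and an arbitrary choice on each chart has no reason to be compatible on overlaps; stability of $x$ makes the \emph{subdivision} functorial, not any particular polarization. For descent you must exhibit a canonical choice.

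The paper handles this by reversing the order of operations: rather than building local $L_\sigma$ and gluing, it first observes that stability of $x$ makes the exceptional locus $E \subset \cX'$ a globally well-defined prime divisor, checks (locally, via toric charts) that $E$ is $\QQ$-Cartier, and then sets $L = \cO(-kE)$ for $k$ sufficiently divisible. Relative ampleness of this global $L$ is then verified chart-by-chart using \cite[4.6.4]{EGAII} and the classical fact that $-kE$ is ample for a toric star subdivision. Your argument becomes correct if you take $L_\sigma = \cO(-kE_\sigma)$ with $E_\sigma$ the local exceptional divisor; these visibly glue to $\cO(-kE)$, and this is exactly the paper's line bundle. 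Note also that you should account for the $\QQ$-Cartier issue: the exceptional divisor need not be Cartier on a non-smooth $\cA_\sigma$, so a multiple is required.
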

\begin{proof}
Let $\phi_x: \cX' \to \cX$ be a star subdivision given by a stable vector $x: \cA \to \cX$. Note that $\phi_x$ is representable and birational. It suffices to produce a $\phi_x$-ample line bundle over $\cX'$. Let $E \subset \cX'$ be the exceptional divisor. Since $x$ is stable, such $E$ is a well-defined prime divisor over $\cX'$. We first notice that $E$ is $\QQ$-cartier. This could be checked locally via the toric geometry over each chart, see \cite[11.1.6(b)]{tv}. Let $L$ be the line bundle associated to $- k\cdot E$ for some sufficiently divisible positive integer $k$. By \cite[4.6.4]{EGAII} to see that $L$ is $\phi_x$-ample, it suffices to check the statement locally over $\cX$. By taking base change to a covering of $\cX$, we may assume that $\cX = \cA_{\sigma}$. Note that $\cA_{\sigma}$ is given by the global quotient of the affine toric variety $\AA_{\sigma}$ by its maximal torus. The ampleness follows from the fact that star subdivisions induce equivariant projective modifications of toric varieties in which the Cartier divisor $-kE$ is ample. 
\end{proof}

\subsection{Barycentric subdivision}
\label{sec:barycentric}

For a fine, saturated, sharp monoid $\sigma$, let $B(\sigma)$ be the barycentric subdivision of $\sigma$ (see, e.g., \cite[Example II.2.1]{KKMS}, \cite[Exercise~11.1.10]{tv}).  The fan $B(\sigma)$ is automatically simplicial. We obtain a map $\cA_{B(\sigma)} \rightarrow \cA_\sigma$ that is stable under base change via face maps, by definition.  By descent we obtain a map $B(\cX) \rightarrow \cX$ that we call the barycentric subdivision of $\cX$.

\begin{proposition} \label{prop:subdiv-projective}
The barycentric subdivision of a quasi-compact Artin fan is a projective morphism.
\end{proposition}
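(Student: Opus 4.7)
The plan is to realize the barycentric subdivision as a finite iterated composition of star subdivisions and then invoke Proposition~\ref{prop:star-subdivision}, using that a composition of projective representable morphisms is projective.

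First I would recall the standard combinatorial fact from toric geometry (compare \cite[Exercise~11.1.10]{tv}) that for a fine, saturated, sharp monoid $\sigma$, the barycentric subdivision $B(\sigma)$ is obtained from $\sigma$ by performing star subdivisions at the barycenters $b_\tau$ of all faces $\tau \subset \sigma$, carried out in order of strictly decreasing dimension. Each such barycenter, suitably rescaled to a primitive integer vector, determines a morphism $\cA \to \cA_\sigma$ (or more precisely, to the partially subdivided intermediate Artin fan), since $b_\tau$ sits in the interior of a well-defined face at each stage of the inductive construction.

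Next I would verify that each of these barycenter vectors is stable in the sense of Section~\ref{sec:star}. Stability amounts to the assertion that the face of the partially subdivided fan through which the vector factors is uniquely determined; this is immediate because the barycenter $b_\tau$ lies in the relative interior of a unique cone at every stage of the iterated construction, and the resulting vector is manifestly invariant under any automorphism of $\sigma$ preserving $\tau$. Given stability, Section~\ref{sec:star} produces a globally defined star subdivision of the Artin fan at each barycenter, and I would check by induction on dimension that the resulting composite equals $\cA_{B(\sigma)} \to \cA_\sigma$ locally on $\cX$.

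Then I would globalize: the construction $\sigma \mapsto B(\sigma)$ is functorial under inclusions of faces, so the local star subdivisions at barycenters are stable under base change by face maps $\cA_\tau \to \cA_\sigma \to \cX$ and therefore glue to a global morphism $B(\cX) \to \cX$ realized as an iterated composition of star subdivisions of $\cX$. Each star subdivision is projective by Proposition~\ref{prop:star-subdivision} and is representable; a composition of representable projective morphisms of algebraic stacks is projective, so $B(\cX) \to \cX$ is projective.

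The main obstacle will be bookkeeping: one must check that ordering star subdivisions by decreasing dimension really reproduces the barycentric subdivision on Artin fans (not just combinatorially on fans), and that stability is preserved after each intermediate star subdivision so that the next barycenter still defines a legitimate stable vector of the already-modified Artin stack. Once this induction is set up, the projectivity statement follows formally from Proposition~\ref{prop:star-subdivision}.
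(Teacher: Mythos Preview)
Your proposal is correct and follows essentially the same approach as the paper: both realize the barycentric subdivision as an iterated star subdivision along the barycenters of the cones in order of decreasing dimension, note that the relevant barycenters are stable, and conclude projectivity from Proposition~\ref{prop:star-subdivision} together with the fact that a composition of projective morphisms is projective. The paper's proof is a single sentence asserting exactly this; your additional bookkeeping (checking stability at each intermediate stage and compatibility with face maps) is reasonable elaboration, though your justification of stability via ``invariance under automorphisms of $\sigma$ preserving $\tau$'' is slightly imprecise---what is actually needed is that after the higher-dimensional star subdivisions the barycenter $b_\tau$ has a unique lift to every strict chart over the partially subdivided Artin fan, which the decreasing-dimension order guarantees.
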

\begin{proof}
We describe the barycentric subdivision  as a sequence of star subdivisions as follows. The {\em barycenter} $b_\sigma$ of a cone $\sigma$ is the sum of generators of its 1-dimensional faces. To obtain the barycentric subdivision one first star subdivides, in arbitrary order, at the barycenters of cones of maximal dimension $n$, then at the barycenters of the original  cones of dimension $n-1$, etc. 

We claim that these barycenters are stable. First, \change{if $b$ is the image of}
the barycenter $b_\tau$ of an $n$-dimensional cone $\tau$ 
\change{in $\cX$ then $b$}
is stable: 
\change{indeed, if $\cA_\sigma \rightarrow \cX$ were another strict \'etale map to which $b$ lifts then $\sigma$ is be isomorphic to $\tau$ and the barycenter is stable under isomorphism, by construction.}
The barycenter of an $n-1$ dimensional cone $\tau$ is stable in the resulting subdivision, since a factorization $\cA \to \cA_\sigma \to \cX$ either has  $\sigma = \tau$, in which case the factorization is unique as above, or $\sigma = \langle\tau, b_{\tilde\sigma}\rangle$, the cone generated by $\tau$ and the barycenter of an $n$-dimensional  cone  $\tilde\sigma$, which is stable. So two such factorizations can differ only by an automorphism of 
\change{$\langle\tau, b_{\tilde\sigma}\rangle$, but such an automorphism must fix $b_{\tilde \sigma}$ because it is stable.  Therefore two factorizations differ by an automorphism of $\tau$, and once} again they coincide since  $b_\tau$ is invariant
\change{under automorphisms of $\tau$}. Inductively, after star subdividing at all barycenters of $m$-cones for $m>k$, any factorization $\cA \to \cA_\sigma \to \cX$ of $b_\tau$ has $\sigma =  \langle\tau, b_1,\ldots,b_l\rangle$ with  $b_1,\ldots,b_l$ stable. Once again two such factorization can differ only by an automorphism of $\tau$ and again they coincide since  $b_\tau$ is invariant, hence $b_\tau$ is stable.

   Since the barycentric subdivision may be achieved as the composite of  star subdivisions, it is projective. 
\end{proof}

\subsection{Resolution}

The following lemma is essentially a restatement of \cite[Lemma~2.4.6~(1)]{AT2}:

\begin{lemma} \label{lem:barycentric-stable}
Let $\cX$ be an Artin fan and $B\cX$ its barycentric subdivision.  Every vector $x : \cA \rightarrow B \cX$ is stable.
\end{lemma}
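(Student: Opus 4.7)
The plan is to prove the stronger statement that every strict morphism $\psi : \cA_\mu \to B\cX$ is a monomorphism; stability of any vector $x : \cA \to B\cX$ factoring through $\psi$ then follows tautologically, since two factorizations $\cA \rightrightarrows \cA_\mu$ of $x$ must then coincide. Concretely, I would show that the monodromy group of $\psi$ acts trivially on $\mu$, and hence on $\cA_\mu$.

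Since the maps $\cA_{B(\sigma)} \to B\cX$ for cones $\sigma$ of $\cX$ cover $B\cX$ by construction of the barycentric subdivision, we may assume $\psi$ factors strictly through some such chart, identifying $\mu$ with a cone of $B(\sigma)$. By the combinatorics of barycentric subdivision, $\mu$ is then the simplicial cone $\operatorname{cone}(b_{F_1}, \ldots, b_{F_k})$ spanned by the barycenters of a strict chain $F_1 \subsetneq F_2 \subsetneq \cdots \subsetneq F_k$ of non-zero faces of $\sigma$, where $b_F$ denotes the canonical barycenter (the sum of the primitive generators of the rays of $F$). The essential point is that the rays of $\mu$ come with a canonical total ordering given by the strictly increasing dimensions of the associated faces $F_i$.

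The monodromy group of $\psi$ is identified with the subgroup of the monodromy of the chart $\cA_\sigma \to \cX$ consisting of those automorphisms $\varphi$ of $\sigma$ that preserve $\mu$. Any such $\varphi$ permutes the set $\{F_1, \ldots, F_k\}$; since these faces have pairwise distinct dimensions, the permutation must be trivial and $\varphi$ fixes each $F_i$ setwise. Because $b_{F_i}$ is invariant under every automorphism of $F_i$, the automorphism $\varphi$ fixes each $b_{F_i}$ and therefore acts as the identity on the ambient cone $\mu$, completing the argument.

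The main technical subtlety is the identification of the monodromy group of $\psi : \cA_\mu \to B\cX$ with the stabilizer of $\mu$ inside the monodromy of the chart $\cA_\sigma \to \cX$. This requires carefully unwinding the descent data used to glue $B\cX$ from the pieces $\cA_{B(\sigma)}$ and translating the monodromy of $\cX$ into that of $B\cX$; once this bookkeeping is in place, the combinatorics of flag cones provides the argument in a single step, following the pattern of \cite[Lemma~2.4.6~(1)]{AT2}.
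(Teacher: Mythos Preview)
Your proposal is correct and follows essentially the same approach as the paper's proof. Both arguments reduce to the combinatorial observation that a cone of the barycentric subdivision corresponds to a strict chain of faces of some cone $\omega$ of $\cX$, and that the rays (equivalently, the faces) of such a cone are canonically indexed by the strictly increasing dimensions of the members of the chain; this rigidity forces any two factorizations (or, in your formulation, any monodromy automorphism) to coincide. The paper's version is phrased as ``two inclusions $\cA_\tau \rightrightarrows \cA_\sigma$ with the same image in $B\cX$ must agree,'' while you phrase it as ``the monodromy group of a strict $\cA_\mu \to B\cX$ is trivial''; these are equivalent statements and the combinatorial core is identical. The descent bookkeeping you flag as a subtlety is likewise implicit in the paper's proof, which simply asserts that $\sigma$ corresponds to a flag in some $\omega$ without spelling out the identification.
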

\begin{proof}
Suppose we have a strict map $\cA_\sigma \rightarrow B \cX$ and two maps $x,y : \cA \rightarrow \cA_\sigma$ that have isomorphic images in $B \cX$.  Let $\tau$ be the minimal face of $\sigma$ containing $x$. Then symmetrically we also have a minimal face $\tau'$ of $\sigma$ containing $y$ with $\tau \cong \tau'$. We thus have a pair of maps $\cA_\tau \rightrightarrows \cA_\sigma$ that compose to the same map $\cA \to B \cX$ according to the two faces $\tau$ and $\tau'$ of $\sigma$. But recall that $\sigma$ is simplicial, say of dimension $d$, and corresponds to a flag of $d$ faces of a monoid $\omega$, for some strict $\cA_\omega \rightarrow \cX$.  A face $\tau \subset \sigma$ is characterized uniquely by the dimensions of the faces in the corresponding subflag of $\omega$.  A fortiori, the two inclusions $\tau \subset \sigma$ must coincide.
\end{proof}

The following theorem follows the argument of  \cite[Proposition 2.4.1]{AT2}. Step 3 is based on \cite[Lemma 8.7]{AMR}.

\captionsetup{width=.45\textwidth}
\begin{figure}

\begin{minipage}[b]{.5\textwidth}

\begin{center}

\begin{tikzpicture}

\begin{scope}[shift={(0,-4)}]

\draw (-3,1) -- (0,0);
\fill (-3,1) circle (0.030cm);

\def\loopleftside{(0,0) .. controls (0,0.25) and (-.3,0.3) .. (-.3,1) .. controls (-.3,1.5) and (-.3,2) .. (0,2)}
\def\looprightside{(0,0) .. controls (0,0.25) and (.3,0.3) .. (.3,1) .. controls (.3,1.5) and (.3,2) .. (0,2)}

\fill [pattern=north east lines,pattern color=black!10!white] (-3,1) -- \looprightside -- (-3,1);
\fill [fill=white] (-3,1) -- \loopleftside -- (-3,1);
\fill [pattern=north west lines,pattern color=black!30!white] (-3,1) -- \loopleftside -- (-3,1);

\draw [dotted] \loopleftside;
\draw [dotted] \looprightside;

\end{scope}

\draw[thick,->] (-1.5,-0.5) -- (-1.5, -1.5);

\def\loopleftside{(0,0) .. controls (0,0.25) and (-.3,0.3) .. (-.3,1) .. controls (-.3,1.7) and (0,1.75) .. (0,2)}
\def\looprightside{(0,0) .. controls (0,0.25) and (.3,0.3) .. (.3,1) .. controls (.3,1.7) and (0,1.75) .. (0,2)}

\fill [pattern=north east lines,pattern color=black!10!white] (-3,1) -- \looprightside -- (-3,1);
\fill [fill=white] (-3,1) -- \loopleftside -- (-3,1);
\fill [pattern=north west lines,pattern color=black!30!white] (-3,1) -- \loopleftside -- (-3,1);

\draw [dotted] \loopleftside;
\draw [dotted] \looprightside;

\draw (-3,1) -- (0,0);
\draw (-3,1) -- (0,2);

\draw[thick,->] (-1.5,3.5) -- (-1.5, 2.5);

\fill (-3,1) circle (0.030cm);

\begin{scope}[shift={(0,4)}]

\def\triangleFD{(-3,1) -- (0,0) -- (-0.2,0.75) -- cycle}
\def\triangleBD{(-3,1) -- (0,0) -- (0.2,1.25) -- cycle}
\def\triangleFU{(-3,1) -- (-0.2,0.75) -- (0,2) -- cycle}
\def\triangleBU{(-3,1) -- (0.2,1.25) -- (0,2) -- cycle}

\path [pattern=north east lines, pattern color=black!10!white] \triangleBD;
\path [pattern=north west lines, pattern color=black!10!white] \triangleBU;
\path [fill=white] \triangleFU;
\path [fill=white] \triangleFD;
\path [pattern=north west lines, pattern color=black!30!white] \triangleFD;
\path [pattern=north east lines, pattern color=black!30!white] \triangleFU;

\draw (-3,1) -- (0,0);
\draw (-3,1) -- (0,2);
\draw (-3,1) -- (-0.2,0.75);
\draw [dashed, color=black!20] (-3,1) -- (0.2,1.25);
\draw [dotted] (0,0) -- (-0.2,0.75);
\draw [dotted] (-0.2,0.75) -- (0,2);
\draw [dotted] (0,0) -- (0.2,1.25);
\draw [dotted] (0.2,1.25) -- (0,2);
\fill (-3,1) circle (0.030cm);

\end{scope}

\end{tikzpicture}

\end{center}

\caption{Two barycentric subdivisions of the Artin fan of a logarithmic curve with a single node, illustrated using generalized cone complexes  (Example \ref{Ex:nodal}).}\label{Fig:nodal-blowup}

\end{minipage}\begin{minipage}[b]{.5\textwidth}

\begin{center}

\begin{tikzpicture}

\begin{scope}[shift={(0,4)}]

\draw [white, pattern=north west lines, pattern color=black!20!white] (0,0) -- (2,0) -- (2,2) -- (0,0);

\fill (0,0) circle (0.050cm);

\draw  (0,0) -- (2,2);
\draw (0,0) -- (2,0);

\end{scope}

\draw[thick,->] (1,3.5) -- (1, 2.5);

\draw [white, pattern=north west lines, pattern color=black!20!white] (0,0) -- (2,0) -- (2,2) -- (0,0);

\fill (0,0) circle (0.050cm);

\draw [dashed] (0,0) -- (2,2);
\draw (0,0) -- (2,0);

\end{tikzpicture}

\end{center}

\caption{Barycentric subdivision of $\cA^{[2]}$, the Artin fan of the  Whitney umbrella, illustrated using generalized cone complexes   (Example \ref{Ex:whitney}).}\label{Fig:whitney-blowup}

\end{minipage}

\end{figure}

\begin{theorem} \label{thm:subdiv}
Any quasi-compact Artin fan $\cX$ has a projective subdivision $\cY \to \cX$ admitting a strict map  $\cY \to \cA^{n}$, for some integer $n$. 
\end{theorem}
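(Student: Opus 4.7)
The plan is to take $\cY$ to be the barycentric subdivision $B\cX$ and then construct the desired strict map to $\cA^S$ directly out of the rays of $\cY$. By Proposition~\ref{prop:subdiv-projective} the morphism $\cY \to \cX$ is automatically projective, by Lemma~\ref{lem:barycentric-stable} every vector $\cA \to \cY$ is stable, and by construction of the barycentric subdivision $\cY$ is simplicial, so at every geometric point of $\cY$ corresponding to a cone $\sigma$ the characteristic stalk is the free monoid $\sigma^\vee$ on the primitive generators $\rho^*$ dual to the rays $\rho$ of $\sigma$.  Assuming $\cX$ (and hence $\cY$) is quasi-compact, as it is in the intended applications, the set $S$ of rays of $\cY$ is finite.

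The main step will be to construct, for each $\rho \in S$, a global section $t_\rho \in \Gamma(\cY, \oM_\cY)$ and then assemble these into the desired morphism.  On each strict chart $\cA_\sigma \to \cY$ I would define $t_\rho|_{\cA_\sigma}$ to equal the dual generator $\rho^* \in \sigma^\vee$ if $\rho$ factors through $\cA_\sigma$ as a ray of $\sigma$, and to equal $0$ otherwise.  Stability of $\rho$ guarantees that whenever such a factorization exists it is unique, so $t_\rho|_{\cA_\sigma}$ is unambiguous; compatibility with a face inclusion $\cA_\tau \subset \cA_\sigma$ follows from simpliciality, since if $\rho$ is a ray of $\sigma$ not lying in $\tau$ then $\rho^*$ vanishes on every ray of $\tau$ and so is a unit in $\tau^\vee$, passing to $0$ in the characteristic.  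Assembling these $|S|$ global sections produces a morphism $f : \cY \to \cA^S$, and strictness can be checked stalk-by-stalk: at a point corresponding to $\sigma$ the nonzero coordinates are precisely the rays of $\sigma$, and the induced map $\NN^{\{\rho : \rho \text{ a ray of } \sigma\}} \to \sigma^\vee$ sending each $\rho$ to $\rho^*$ is an isomorphism because $\sigma$ is simplicial.

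The main obstacle I expect is verifying that the local pieces $t_\rho|_{\cA_\sigma}$ really glue to a bona fide global section of $\oM_\cY$, rather than merely a section of $\oM_\cY^{\gp}$.  This gluing rests essentially on both features delivered by the barycentric subdivision: the simpliciality of $\cY$, so that $\rho^*$ has an unambiguous meaning as a generator of each $\sigma^\vee$; and the stability of $\rho$, so that distinct local lifts of $\rho$ cannot be exchanged by monodromy.  Once those compatibilities are in hand, the remaining assertions---projectivity of $\cY \to \cX$, finiteness of $S$, and strictness of $f$---follow directly from the results of Sections~\ref{sec:Artin-fans} and~\ref{sec:barycentric}.
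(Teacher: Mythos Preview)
Your approach has a genuine gap: you assert that for a simplicial cone $\sigma$ the characteristic monoid $\sigma^\vee$ is free on the dual generators $\rho^\ast$, and hence that the map $\NN^{\{\text{rays of }\sigma\}} \to \sigma^\vee$ is an isomorphism.  This is false for simplicial cones that are not \emph{smooth}.  For instance, take $\sigma \subset \ZZ^2$ with rays $(1,0)$ and $(1,3)$; its barycenter is the primitive vector $(2,3)$, and one of the two cones of the barycentric subdivision has rays $(1,0)$ and $(2,3)$.  The dual of that cone is $\{(a,b) : a \geq 0,\ 2a+3b \geq 0\}$, which is not isomorphic to $\NN^2$ (it has three Hilbert-basis generators).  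At the corresponding point of $B\cX$ your map to $\cA^S$ is therefore not strict; in fact the ``dual generators'' $\rho^\ast$ you want to use need not even lie in the integral monoid $\sigma^\vee$, so the section $t_\rho$ is not well defined.

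The paper's proof has exactly the same overall shape as yours---first pass to the barycentric subdivision to make every vector stable, then build the map to $\cA^S$ out of the rays---but it inserts an essential intermediate step: a further sequence of star subdivisions (the standard toric resolution of singularities, which is available precisely because all vectors are now stable) to make every cone \emph{smooth}, not merely simplicial.  Only after this resolution do the local charts have the form $\cA^r$, and then the map $\cA^r \to \cA^S$ is the evident coordinate inclusion, which is strict.  Your argument would be repaired by inserting this resolution step between the barycentric subdivision and the construction of $t_\rho$.
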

\begin{proof}

{\sc Step 1.} Let $\cX$ be an Artin fan.  Consider  the barycentric subdivision, $B\cX \to \cX$, a projective morphism by Proposition \ref{prop:subdiv-projective}.  By Lemma~\ref{lem:barycentric-stable}, every vector of $B\cX$ is stable. 

{\sc Step 2.} Since every vector in $B\cX$ is stable we can resolve singularities by the same procedure used to resolve singularities in toric and toroidal geometry, see e.g. \cite[Theorem 11*, page 94]{KKMS}.  Indeed, if $\cA_\sigma \rightarrow \cX$ is any strict map, then any element of $\sigma$ corresponds to a stable vector of $\cX$, hence can be used as the center of a star subdivision.  We can apply the familiar procedure to resolve toric singularities individually to each map $\cA_\sigma \rightarrow \cX$.  With further star subdivision we maintain the property that every vector is stable.  After a finite number of subdivisions, we obtain a subdivision $\cY \rightarrow \cX$ where $\cY$ is smooth (and in particular simplicial) with the property that every vector $y : \cA \rightarrow \cY$ is stable. Since the procedure involves only star subdivisions, it is projective.

{\sc Step 3.}
For each ray $x : \cA \rightarrow \cY$, we construct a map $\cY \rightarrow \cA$.  If $\cA_\sigma \rightarrow \cY$ (where $\sigma$ is necessarily isomorphic to $\NN^r$, so $\cA_\sigma \simeq \cA^r$) is a 
\change{strict, \'etale map}
through which $x$ factors, then $x$ factors as the inclusion of a ray of $\sigma$.  This factorization $\cA \rightarrow \cA_\sigma \rightarrow \cY$ is unique since all vectors of $\cY$ are stable (as a consequence of Steps 1 and 2).  We have a canonical projection $\sigma = \NN^r \rightarrow \NN$ onto this ray inducing a map $\cA_\sigma \rightarrow \cA$.  This projection is compatible with restriction to an open subset of $\cA^r$ through which $x$ also factors.  It restricts to the projection to the generic point of $\cA$ on any open set of $\cA^r$ not containing $x$.

If $\cA_\sigma \rightarrow \cY$ is a chart through which $x$ does not factor, then we take $\cA_\sigma \rightarrow \cA$ to be the projection to the generic point.  As the choice of lift of $x$ is unique when it exists, these definitions are unambiguous and hence glue to give a map $\cY \rightarrow \cA$.

Repeating this construction for every ray of $\cY$ we get a map $\cY \rightarrow \cA^n$ where $n$ is the number of rays of $\cY$.  We verify that it is strict:  On a chart $\cA^r \rightarrow \cY$, the map $\cA^r \subset \cA^n$ is, by construction, the identity on the $r$ factors in the domain, and the projection to the generic point at the remaining factors.  In particular, it is an open embedding and therefore strict.

\end{proof}

\begin{remark} When the Artin fan $\cX$ has a cover by \emph{open} Artin \emph{sub}cones, the argument of \cite[Lemma 8.7]{AMR} can be used to show that the resulting strict morphism $\cY \to \cA^n$ is an open embedding.  If $\cX$ has faithful monodromy, then \change{the proof of} Theorem~\ref{thm:subdiv} shows that the barycentric subdivision of $\cX$ has an open cover by subcones.  Combining this with the previous observation, we observe that if $\cX$ is an Artin fan without monodromy then applying Theorem~\ref{thm:subdiv} after barycentric subdivision yields an open substack of $\cA^n$. This is illustrated in Figures \ref{Fig:nodal-blowup} and \ref{Fig:whitney-blowup}. One barycentric subdivision suffices for our theorem, but in Figure \ref{Fig:nodal-blowup} a second barycentric subdivision is needed to embed the complex as a fan.
\end{remark}

With notation as in the statement and proof of the theorem, consider the set $S' \subset S$ consisting of strict maps $\cA \rightarrow \cY$ such that the composite $\cA\to \cX$ is not strict. These correspond to the exceptional divisors of $\cY \to \cX$: each of the generators of $\cA^{S'}$ pulls back to a line bundle and section $(\cL_i, \sigma_i)$ on $\cY$ vanishing along an exceptional divisor of the projective morphism $\cY \rightarrow \cX$.  

\begin{corollary}\label{cor:target-modify}
Let $X$ be a noetherian logarithmic stack whose logarithmic strata are locally connected in the smooth topology.  Then there is a logarithmic modification $\Psi : Y \rightarrow X$ with relatively ample line bundle $L$, as well as line bundles and sections $(L_i, s_i)$ on $Y$ vanishing along substacks $E_i \subset Y$, having the following properties:
\begin{enumerate}[label=(\roman{*})]
 \item \label{cor:target-modify:1} The morphism $\Psi$ is projective, logarithmically  \'etale, and surjective.
 \item \label{cor:target-modify:2} $\Psi$ is an isomorphism away from the  locus $\cup_{i}E_{i}$.
 \item \label{cor:target-modify:3} We have $L = \bigotimes L_i^{\otimes m_i}$ with $m_i$ negative.
 \item \label{cor:target-modify:4} $Y$ has Deligne-Faltings logarithmic structure.
 \item \label{cor:target-modify:5} If $X$ is logarithmically  smooth, then the underlying structure $\uY$ is smooth in the usual sense.
 
\end{enumerate}
\end{corollary}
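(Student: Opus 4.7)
The plan is to pull the required modification back from a suitable subdivision of the Artin fan of~$X$. By Proposition~\ref{prop:Artin-fan-existence}, $X$ admits an Artin fan $\cX$ together with a strict morphism $X \to \cX$. Apply Theorem~\ref{thm:subdiv} to produce a projective subdivision $\cY \to \cX$ equipped with a strict map $\cY \to \cA^S$, and set $Y \defeq X \times_\cX \cY$ with $\Psi : Y \to X$ the first projection.

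Most of the required properties then follow by base change. The morphism $\cY \to \cX$ is projective by Theorem~\ref{thm:subdiv}, surjective as a subdivision, and logarithmically \'etale since any morphism between Artin fans is so; each of these is preserved under the strict base change along $X \to \cX$, yielding~(i). The composition $Y \to \cY \to \cA^S$ is strict, exhibiting the Deligne--Faltings structure demanded by~(iv). For~(v), step~2 of the proof of Theorem~\ref{thm:subdiv} makes $\cY$ a smooth Artin fan, so that smoothness of $\uY$ in the logarithmically smooth case follows from base change along the smooth morphism $X \to \cX$.

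Properties~(ii) and~(iii) are where the discussion following Theorem~\ref{thm:subdiv} does real work. Let $S' \subset S$ be the subset of strict maps $\cA \to \cY$ whose composite $\cA \to \cX$ is not strict; these index the exceptional divisors of the subdivision $\cY \to \cX$. Take $E_i$ to be the pullbacks to $Y$ of these exceptional divisors and $(L_i, s_i)$ to be the pullbacks of the universal line bundles and sections on the corresponding factors of $\cA^{S'}$. Property~(ii) then reduces to the standard fact that a representable birational morphism of Artin fans (guaranteed by Proposition~\ref{prop:subdiv-proper}) is an isomorphism outside its exceptional locus.

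The substantive step is~(iii). The proof of Theorem~\ref{thm:subdiv} constructs $\cY \to \cX$ as an iterated sequence of star subdivisions (in both the barycentric step and the toric resolution step), and by Proposition~\ref{prop:star-subdivision} each individual star subdivision carries a relatively ample line bundle of the form $\cO(-kE)$ for some exceptional divisor $E$ and some $k > 0$. Combining these relatively ample bundles across the composition via the standard criterion ($L_f \otimes f^* L_g^{\otimes n}$ is relatively ample for $n \gg 0$) should produce a relatively ample bundle on $\cY \to \cX$ of the form $\bigotimes L_i^{\otimes m_i}$ with every $m_i < 0$; pulling back to $Y$ yields the required~$L$. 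The main obstacle in writing this out is precisely this bookkeeping in~(iii): one must track all exceptional divisors through each stage of the star subdivisions and verify that multiplying by pullbacks of earlier relatively ample bundles to sufficiently high powers preserves the negative-coefficient structure across the entire composition.
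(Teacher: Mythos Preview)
Your proposal is correct and follows essentially the same route as the paper: pull back the subdivision $\cY \to \cX$ of Theorem~\ref{thm:subdiv}, read off (i), (iv), (v) directly, take the $(L_i,s_i,E_i)$ as pullbacks of the exceptional data indexed by $S'$, and assemble the relatively ample $L$ from the anti-ample exceptional divisors of the successive star subdivisions. The bookkeeping you flag in~(iii) does go through: pulling back $-E_k$ through a later star subdivision only adds further nonpositive multiples of the new exceptional divisor, so the inductively built relatively ample bundle retains strictly negative coefficients on every $E_i$; the paper makes the same point more tersely by observing that each $E_i$ appears in some pullback of a star-subdivision exceptional divisor.
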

\begin{proof}
Let $\cX$ be the Artin fan of $X$, let $\cY$ be given by Theorem \ref{thm:subdiv}  and take $Y = \cY \fpr_{\cX} X$.  By the theorem, this gives~\ref{cor:target-modify:1},~\ref{cor:target-modify:4}, and~\ref{cor:target-modify:5} immediately.  For the $L_i$, $s_i$, and $E_i$ we simply pull back $\cL_i$, $\sigma_i$, and $\cE_i$ from $\cY$.  This gives~\ref{cor:target-modify:2}.   Recall that the exceptional divisor of any star subdivision is anti-ample. Since the composition of projective morphisms is projective, there is a linear combination, with positive coefficients, of the pullbacks of these divisors which is anti-ample for $\cY \to \cX$. Since every divisor $(s_i)$ corresponding to an element of $S'$ appears in such an exceptional divisor,   there exist  negative integers $m_i$ such  that $\cL = \bigotimes \cL_i^{\otimes m_i}$ is relatively ample for $\cY \to \cX$. Then ~\ref{cor:target-modify:3} is obtained by taking $L$ to be the pull-back of $\cL$.
\end{proof}

\subsection{Stable maps into subdivisions}

Let $\cX$ be an Artin fan.  Recall from Section~\ref{sec:introprestablemaps} that $\fM(\cX)$ is the moduli stack parameterizing prestable logarithmic maps to $\cX$ and $\fM'(\cY \to \cX)$ parameterizes prestable maps which are relatively stable for $\cY \to \cX$.  An object of $\fM'(\cY \to \cX)(S)$ is a diagram
\begin{equation}\label{relmap}
\vcenter{\xymatrix{
C\ar[r]\ar[d]&\cY\ar[d]\\
\oC\ar[r]&\cX
}}
\end{equation}
of prestable logarithmic maps over $S$ where $C\to\oC$ is a logarithmic modification and the automorphism group of this diagram relative to the bottom arrow $\oC\to\cX$ is finite.  In other words, the map $C \rightarrow \cY \fpr_{\cX} \oC$ is stable and $C \rightarrow \oC$ is a contraction of rational components.  The morphism $\fM'(\cY \to \cX) \to \fM(\cX)$ under consideration sends a diagram~\eqref{relmap} to $\oC\to\cX$.  See \cite[\change{Sections~3 and~4}]{AWnew}, where this morphism is shown to be birational, for a more thorough discussion.  

\begin{proposition} \label{prop:alteration-lift}
Let $\cY \rightarrow \cX$ be a modification  of Artin fans.  Any diagram
\begin{equation*} \xymatrix{
& & \fM'(\cY \rightarrow \cX) \ar[d] \\
S' \ar@{-->}@/^15pt/[urr]^-{\exists!} \ar@{-->}[r]_{\exists} & S \ar[r]  &\fM(\cX)
} \end{equation*}
admits a unique lift after passing to a (not necessarily representable) logarithmic modification $S' \rightarrow S$.
\end{proposition}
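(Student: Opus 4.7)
The statement is \'etale-local on $S$, so I would first reduce to the case where $S$ is affine, $\cX=\cA_\sigma$ for a sharp saturated monoid $\sigma$, and $\cY=\cA_\Sigma$ for a subdivision $\Sigma$ of $\sigma$. Given the prestable log curve $\oC\to S$ with its map $\bar f:\oC\to\cX$, I would form the base change
\[ W\defeq \oC\fpr_\cX \cY, \]
which is a log modification of $\oC$ by stability of log modifications under base change. The difficulty is that $W$ need not itself be a family of log curves over $S$: the subdivision $\Sigma$ may insert chains of rational components at the nodes of $\oC$ whose lengths are not recorded in $\oM_S$, and one must refine $S$ to remember them.

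To construct a canonical log modification $S'\to S$ that records these chain lengths, I would work pointwise at the nodes. At a node $q$ of $\oC$ lying over $\bar s\in S$ we have $\oM_{\oC,q}\simeq \oM_{S,\bar s}\oplus_\NN\NN^2$, amalgamated along the smoothing parameter $\delta\in\oM_{S,\bar s}$; the map $\bar f$ at $q$ corresponds to a homomorphism $\sigma^\vee\to\oM_{\oC,q}$, and pulling $\Sigma$ back along this homomorphism produces a subdivision of the node monoid which descends to a valid family of log curves only after $\oM_{S,\bar s}$ is refined so that $\delta$ meets the new walls integrally. To globalise this recipe I would form a relative Artin fan $\tilde\cC$ of $\oC$, using Proposition~\ref{prop:Artin-fan-existence-2} applied to the pair of log structures pulled back from $\cX$ and from $S$, and then invoke the Artin fan functoriality of Corollary~\ref{cor:Artin-fan-functoriality}: the subdivision $\cY\to\cX$ pulls back canonically to a subdivision $\tilde\cC'\to\tilde\cC$, whose image under the projection to the Artin fan $\cS$ of $S$ is a canonical subdivision $\cS'\to\cS$. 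I would then set $S'\defeq S\fpr_\cS\cS'$.

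Over $S'$ the pullback $W_{S'}\defeq W\fpr_S S'$ is now a family of prestable log curves with a map to $\cY$; contracting any rational components destabilised by the projection $W_{S'}\to\cY\fpr_\cX\oC_{S'}$ yields the log curve $C$ and the desired object of $\fM'(\cY\to\cX)(S')$. Uniqueness of $S'\to S$ and of the lift then follows from the canonicity of every ingredient: the local combinatorial recipe at each node is uniquely determined by $\Sigma$ and $\bar f$, Artin fan functoriality assembles it into a unique global $\cS'\to\cS$, and stabilising contractions of rational components are unique.

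The main obstacle I expect is the node-level combinatorics: checking that the local refinement of $\oM_{S,\bar s}$ prescribed at each node is independent of chart choices, that the recipes at distinct nodes and markings of $\oC$ are mutually compatible, and that the resulting $W_{S'}\to S'$ is genuinely a family of prestable log curves (flat, with the expected nodal-and-log structure) rather than a non-flat degeneration. This ultimately reduces to a tropical calculation on the cones $\oM_{S,\bar s}\oplus_\NN\NN^2$ together with the verification that the resulting subdivisions glue across the face-map inclusions used to assemble $\cS$ from its Artin-fan charts.
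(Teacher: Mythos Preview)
Your overall architecture matches the paper's: pass to the relative Artin fan $\ocC$ of $\oC$ over $\cS \times \cX$ via Corollary~\ref{cor:Artin-fan-functoriality}, pull back the subdivision $\cY \to \cX$ to a subdivision $\cC \defeq \ocC \fpr_{\cX} \cY$ of $\ocC$, and then modify the base so that the result becomes a family of logarithmic curves.  The \'etale-local reduction to $\cX = \cA_\sigma$ at the start is not used in the paper and you quietly abandon it yourself once you switch to the Artin-fan language.

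The genuine gap is the sentence ``whose image under the projection to the Artin fan $\cS$ of $S$ is a canonical subdivision $\cS' \to \cS$.''  There is no such image operation: a subdivision of the total space of a map of cone complexes does not push forward to a subdivision of the base.  What the paper does at this step is invoke the toroidal semistable-reduction machinery of \cite{AK}: first replace $\cS$ by a smooth subdivision (Theorem~\ref{thm:subdiv}), then use \cite[Lemmas~4.1, 4.3 and Remark~4.6]{AK} to find a further subdivision of $\cS$ over which $\cC \to \cS$ becomes equidimensional and hence flat, and finally pass to a finite-index sublattice of $\cS$, i.e.\ a root stack, via \cite[Proposition~5.1]{AK} to force the fibers to be reduced.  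This last step is exactly where the non-representability of $S' \to S$ enters, and it is not optional: without it the fibers of $\cC \to \cS$ can carry multiplicities and $C$ fails to be a nodal curve.  Your node-by-node description of the required refinement of $\oM_{S,\bar s}$ is the correct local shadow of this construction, but the content of \cite{AK} is precisely that these local recipes can be made global and that the procedure terminates; you cannot replace that input with an appeal to functoriality alone.

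A smaller point: once the base is correctly modified, $C = \cC \fpr_{\ocC} \oC$ already lies in $\fM'(\cY \to \cX)$, because $C \to \cY \fpr_{\cX} \oC$ is the identity and every component of $C$ contracted in $\oC$ is therefore stabilized by the map to $\cY$.  Your proposed contraction of destabilized components is vacuous here, and in any case such contractions are delicate to perform in families.
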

\begin{proof}
The map $S \rightarrow \fM(\cX)$ corresponds to a logarithmic curve $\oC$ over $S$ and a map $\oC \rightarrow \cX$.  Applying Corollary~\ref{cor:Artin-fan-functoriality} to the map $\oC \rightarrow S \times \cX$ we obtain a diagram of Artin fans~\eqref{eqn:7},
\begin{equation} \label{eqn:7} \vcenter{\xymatrix{
& \cY \ar[d] \\
\ocC \ar[r] \ar[d] & \cX \\
\cS
}} \end{equation}
\change{where $\cS$ is the Artin fan of $S$ and $\oC$ is the relative Artin fan of $\ocC$ over $\cS \times \cX$.}
Take $\cC = \ocC \fpr_{\cX} \cY$, with the fiber product formed in the category of fine, saturated, logarithmic algebraic stacks; this is the pullback of a subdivision of $\cX$, hence is a subdivision of $\ocC$, \change{and in particular has connected fibers over $\ocC$}.  After a logarithmic modification of $\cS$, we can assume that $\cS$ is smooth (Theorem~\ref{thm:subdiv}), $\cC \rightarrow \cS$ is equidimensional \cite[Lemmas~4.1 and~4.3]{AK}, and therefore that $\cC$ is flat over $\cS$ \cite[Remark~4.6]{AK}.  By \cite[Proposition~5.1]{AK} we can ensure as well that the fibers of $\cC \rightarrow \cS$ are reduced by replacing the integral lattice of $\cS$ with a finite index sublattice.%
\footnote{Note that this corresponds to a root stack construction, so that $S' \rightarrow S$ is not necessarily representable.}

Now let $C = \cC \fpr_{\ocC} \oC$.  We show that $C$ is a logarithmic curve \cite[Definition~4.5]{ACGHOSS} over $S$.  We must verify the following properties:
\begin{enumerate}
\item $C$ is logarithmically smooth over $S$:  It is the composition of a logarithmically \'etale map $C \rightarrow \oC$ (the base change of the log\change{arithmically} \'etale map $\cY \rightarrow \cX$) and a logarithmically smooth map $\oC \rightarrow S$.
\item $C \rightarrow S$ has connected fibers:  Since $\oC$ has connected fibers over $S$, it is sufficient to show that $C \rightarrow \oC$ has connected fibers.  This follows by \change{strict} base change from the connectedness of the fibers of 
\change{$\cC \rightarrow \ocC$.}
\item $C \rightarrow S$ is integral in the logarithmic sense:  Since $C \rightarrow \cC$ and $S \rightarrow \cS$ are strict, this is immediate from the flatness of the map $\cC \rightarrow \cS$.
\item $C \rightarrow S$ has reduced, $1$-dimensional fibers:  The map $C \rightarrow \cC \fpr_{\cS} S$ is smooth of relative dimension~$1$ and $\cC \rightarrow \cS$ has reduced $0$-dimensional fibers.
\item $C$ is proper over $S$:  The map $\cC \rightarrow \ocC$ is proper (it is a subdivision), so $C \rightarrow \oC$ is proper, and $\oC \rightarrow S$ is proper by hypothesis.
\end{enumerate}
Therefore $C$ lifts $\oC \rightarrow \cX$ to a diagram~\eqref{relmap}.  It is the base change of a subdivision, so it is a logarithmic modification.  Furthermore, any component of $C$ contracted in $\oC$ is stabilized by the map to $\cY$.  Therefore this diagram lifts $\oC \rightarrow \cX$ to a point of $\fM'(\cY \rightarrow \cX)$.

We verify that this lift is unique.  Suppose that $C'$ is another lift.  By the universal property of fiber product, we obtain a map $f : C' \rightarrow C = \oC \fpr_{\cX} \cY$.  By the definition of $\fM'(\cY \rightarrow \cX)$, this map is stable.  On the other hand, the map $C' \rightarrow \oC$ is a logarithmic modification of logarithmic curves, hence is a contraction of semistable components.  Thus, $C' \to C$ is stable and a contraction of semistable components, hence is an isomorphism.
\end{proof}

\begin{corollary}\label{Cor:valuative}
Assume that $\cY$ is a subdivision of an Artin fan $\cX$.  Then the morphism
\begin{equation*}
\fM'(\cY \rightarrow \cX) \rightarrow \fM(\cX)
\end{equation*}
is birational and satisfies the valuative criterion for properness.
\end{corollary}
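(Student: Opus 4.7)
\textbf{Plan}: My plan is to treat the two assertions separately, relying heavily on Proposition~\ref{prop:alteration-lift} for the valuative criterion and appealing to prior work for birationality.

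For birationality, I would cite \cite[Proposition~5.3.1]{AW}, which is already referenced in the lead-up to Proposition~\ref{prop:alteration-lift}. Morally the statement reflects the fact that the subdivision $\cY \to \cX$ restricts to an isomorphism over the dense open where $\cX$ has trivial logarithmic structure; consequently on the corresponding open substack of $\fM(\cX)$ consisting of maps $\oC \to \cX$ meeting this open locus at the generic point of every component, the map $\fM'(\cY \to \cX) \to \fM(\cX)$ admits a tautological inverse (no contraction is needed because $\oC \to \cY \fpr_\cX \oC$ is already an isomorphism on a dense open).

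For the valuative criterion, my approach is to reduce directly to Proposition~\ref{prop:alteration-lift}. Given a DVR $R$ with fraction field $K$ and a commutative square
\begin{equation*} \xymatrix{
\Spec K \ar[r] \ar[d] & \fM'(\cY \to \cX) \ar[d] \\
\Spec R \ar[r] & \fM(\cX),
} \end{equation*}
I would apply Proposition~\ref{prop:alteration-lift} with $S = \Spec R$ to obtain a logarithmic modification $\widetilde S \to \Spec R$ and a unique compatible lift $\widetilde S \to \fM'(\cY \to \cX)$. The logarithmic structure pulled back to $\Spec R$ has characteristic monoid that embeds into $\NN$ at the closed point via the valuation, so $\widetilde S \to \Spec R$ is (\'etale locally) an $n$-th root stack along the closed point for some $n \geq 1$. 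Taking the associated Kummer cover $R \hookrightarrow R' = R[t]/(t^n - \pi)$, where $\pi$ is a uniformizer, trivializes the root stack and yields a strict map $\Spec R' \to \widetilde S$. Composing with $\widetilde S \to \fM'(\cY \to \cX)$ then produces the required extension to $\Spec R'$ over the finite extension $R \subset R'$, as demanded by the weak valuative criterion. Uniqueness follows formally from the uniqueness clause of Proposition~\ref{prop:alteration-lift}: two putative extensions $\Spec R' \rightrightarrows \fM'(\cY \to \cX)$ would give two lifts over a common dominating logarithmic modification of $\Spec R'$, contradicting the proposition.

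The main obstacle has already been absorbed into Proposition~\ref{prop:alteration-lift}; what remains is the routine observation that logarithmic modifications of a trait are Kummer-dominated root stacks, after which the valuative criterion is a formal consequence. The only subtlety to watch for is ensuring that the chosen Kummer extension $R \hookrightarrow R'$ is genuinely a finite extension of discrete valuation rings (which it is, since adjoining an $n$-th root of a uniformizer in a DVR yields a totally ramified extension) and that the resulting map $\Spec R' \to \fM'(\cY \to \cX)$ is indeed strict over $\Spec R$, so that no further logarithmic information is lost.
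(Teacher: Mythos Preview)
Your approach coincides with the paper's: its proof is literally the two sentences citing \cite[Proposition~5.3.1]{AW} for birationality and declaring the valuative criterion ``immediate from Proposition~\ref{prop:alteration-lift}.'' Your elaboration of the latter is thus more detailed than anything the paper supplies.

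There is, however, one inaccuracy in that elaboration. The claim that the characteristic monoid of the pulled-back logarithmic structure on $\Spec R$ \emph{embeds} into $\NN$ via the valuation is not correct in general: the valuation furnishes a monoid homomorphism $\overline M_s \to \NN$, but it need not be injective (for instance, the minimal log structure at a point of $\fM(\cX)$ parameterizing a curve with several nodes already has characteristic monoid of rank~$>1$, and distinct smoothing parameters can have the same valuation). Consequently the logarithmic modification $\widetilde S \to \Spec R$ produced by Proposition~\ref{prop:alteration-lift} need not be a mere root stack: it arises by pulling back a subdivision of the Artin fan of $S$, which may have dimension~$>1$, so $\widetilde S$ can carry genuine exceptional components over the closed point. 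The repair is routine and leaves your strategy intact: either note that $\widetilde S \to \Spec R$ is proper and surjective and dominate it by a DVR $R'$ over a finite extension of $R$ in the usual way, or first replace the pulled-back log structure on $\Spec R$ by the standard one (using the valuation homomorphism $\overline M_s \to \NN$) before invoking Proposition~\ref{prop:alteration-lift}, after which the relevant Artin fan really is $\cA$ and your root-stack description becomes valid.
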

\begin{proof}
Birationality was proved in \cite[\change{Proposition~5.2.1}]{AWnew}.  The valuative criterion is immediate from Proposition~\ref{prop:alteration-lift}.
\end{proof}

\subsection{The valuative criterion}\label{Sec:separation}
\begin{proof}[Proof of Proposition \ref{Prop:separation}]
 Let $R$ be a discrete valuation ring and $K$ be the fraction field of $R$. Consider   an object $\uf: \spec K \to \wchoice{\mathnormal}{\smash{\ocM(X)}}{\wunderbar}$, which we would like to extend, possibly after base change, to a unique  object $\spec R \to \wchoice{\mathnormal}{\smash{\ocM(X)}}{\wunderbar}$. The object $\uf$ corresponds to a \change{logarithmic structure $M_S$ on $\uS = \Spec K$, and a} logarithmic morphism $f: S  \to \ocM(X)$, \change{where $S := (\Spec K,M_S)$}.
 
\change{Choose a projective subdivision $\cY \rightarrow \cX$ as in Theorem~\ref{thm:subdiv}, and let $Y = X \mathop\times_{\cX} \cY$.}
 Consider the  the composition $\mathfrak f:S\to \fM(\cX)$ of $f$ with $\ocM(X)\to \fM(\cX)$, and the cartesian diagram \ref{eqn:3}. 
By  Proposition \ref{prop:alteration-lift} there is a logarithmic modification $S' \to S$ and a unique lift of $\mathfrak f$ to  $\mathfrak f': S'\to \fM'(\cY \rightarrow \cX)$, giving rise to a unique lift $f':  S'\to \ocM(Y)$.  
\change{As $S'$ is of finite type, it has a $K$-point, at least after replacing $K$ by a finite extension.  We let $S''$ be this point, with the logarithmic structure restricted from $S'$.}  By the valuative criterion of $\ocM(Y)$ \cite[Corollary 3.11]{AC} after 
\change{replacing $R$ and $K$ with a finite extension}
we have a logarithmic scheme $\change{T''} = (\Spec R, M_{\change{T''}})$ extending $S''$, and a unique extension $\tilde f':\change{T''} \to  \ocM(Y)$ of $f':S' \to  \ocM(Y)$. Composing with $\ocM(Y) \to \ocM(X)$ we obtain an arrow $\tilde f: \change{T''} \to  \ocM(X)$:
 
\begin{equation}\label{diag:valuative}
\xymatrix{
S'' \ar[rrd]|-(.35){f} \ar[rr]^{f'} \ar[d] && \ocM(Y) \ar[d] \\
\change{T''}  \ar@{-->}[rr]_{\tilde f} \ar@{-->}[rru]|-(.35){\tilde f'} && \ocM(X)  
}
\end{equation}

\change{It still remains to show that two extensions $\tilde f_1, \tilde f_2 : \uT = \Spec R \rightarrow \underline{\smash{\ocM(X)}}$ extending $f$ must agree.  It is sufficient to verify this after a finite base change.  We give $\uT$ the
logarithmic structure $M_T$ pulled back from the map $(\tilde f_1, \tilde f_2) : \uT \rightarrow \ocM(X) \times \ocM(X)$.  According to Proposition~\ref{prop:alteration-lift}, there is a logarithmic modification $T' \rightarrow T$ after which there is a unique
lift of $T' \rightarrow \ocM(Y)$ of the composition $T' \rightarrow T \rightarrow \ocM(X)$.  Now, $\uT' \rightarrow \uT$ is surjective and proper, so after a finite base change, it admits a section.  Therefore $\tilde f_1$ and $\tilde f_2$ both lift to $\underline{\smash{\ocM(Y)}}$, hence coincide because $\underline{\smash{\ocM(Y)}}$ is proper.}

\end{proof}

\section{Boundedness of numerical data}

In this section we will identify locally constant numerical data $\Gamma$ on $\ocM(X)$ such that each $\ocM_\Gamma(X)$ is of finite type.  In addition to the genus $g$ of the source curve, the number $n$ of marked points, and the homology class $\beta$ of the curve's image in $X$, we also have evaluation maps
\begin{equation*}
\ocM(X) \rightarrow \wedge X \rightarrow \wedge \cX
\end{equation*}
associated to each marking, see Section \ref{sec:intrologpts}.  The choice of a connected component of $\wedge \cX$ for each marked point gives one more locally constant datum.  Let $\Gamma = (g, n, \beta, \varphi)$ where $\varphi \in \pi_0(\wedge \cX)^{n}$.  We write $\ocM_\Gamma(X)$ for the open and closed substack of $\ocM(X)$ with these numerical data.

Select a logarithmic modification $Y \rightarrow X$, obtained by base change from a subdivision of Artin fans $\cY \rightarrow \cX$, as in Corollary~\ref{cor:target-modify}.  The irreducible components $\cE_i$ of the exceptional locus of $\cY \to \cX$ are nonsingular divisors which are unions of logarithmic strata.  We denote their pre-images on $Y$ by $E_i$ and the corresponding line bundles by $L_i$. 

Write $\ocM_\Gamma(Y)$ for the open and closed substack of $\ocM(Y)$ lying above $\ocM_\Gamma(X)$.  The following proposition, whose proof occupies the rest of this section, will complete the proof of our main theorem:

\begin{proposition}\label{Prop:boundedness}
The algebraic stack $\ocM_\Gamma(Y)$ is of finite type.
\end{proposition}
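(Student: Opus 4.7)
The plan is to decompose $\ocM_\Gamma(Y)$ as a disjoint union
\[
\ocM_\Gamma(Y) \;=\; \coprod_{\Gamma_Y \mapsto \Gamma}\ocM_{\Gamma_Y}(Y)
\]
indexed by those numerical data $\Gamma_Y = (g_Y, n_Y, \beta_Y, \varphi_Y)$ on $Y$ whose image in $\ocM(X)$ lies in the component $\ocM_\Gamma(X)$. Since $\oM_Y$ is globally generated by Corollary~\ref{cor:target-modify}, each $\ocM_{\Gamma_Y}(Y)$ is proper, and in particular of finite type, by \cite[Corollary~3.11]{AC} or \cite[Theorem~0.2]{GS}. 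It therefore suffices to prove that only finitely many $\Gamma_Y$ map to $\Gamma$.

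First I would dispose of the ``curve-theoretic'' data. The map $C \to \oC$ produced by the stabilization $\ocM(Y) \to \ocM(X)$ (which factors through $\fM'(\cY\to\cX)\to\fM(\cX)$ as in~\eqref{eqn:3}) is a contraction of rational components, which preserves the arithmetic genus and the number of marked points. Hence $g_Y = g$ and $n_Y = n$ are determined by $\Gamma$. For the markings, functoriality of $\wedge$ gives a commuting square with the evaluation morphisms for $Y$ and $X$, and by Proposition~\ref{Prop:component-bounded} the map $\pi_0(\wedge Y) \to \pi_0(\wedge X)$ is a bijection; so the choice of component $\varphi_Y \in \pi_0(\wedge Y)^n$ is uniquely determined by $\varphi \in \pi_0(\wedge X)^n$.

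The remaining, and principal, task is to bound the curve class $\beta_Y \in \rH_2(\uY)$. Here I would combine two inputs already assembled in the paper. Proposition~\ref{Prop:deg-constant} says that if $L$ is the relatively ample bundle from Corollary~\ref{cor:target-modify} and $L_Y$ its pullback, then $\beta_Y\cdot c_1(L_Y)$ is a function of $\Gamma$ alone, computable combinatorially from the exceptional decomposition $L_Y = \bigotimes L_i^{\otimes m_i}$ and the contact orders of $f$ along the $E_i$. Then Proposition~\ref{Prop:bound-class} implies that, once $\beta$ and $\beta_Y \cdot c_1(L_Y)$ are fixed, there are only finitely many possibilities for $\beta_Y \in \rH_2(\uY)$ mapping to $\beta \in \rH_2(\uX)$ under $\uY \to \uX$.

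Putting the three pieces together, each $\Gamma$ has only finitely many preimages $\Gamma_Y$, so $\ocM_\Gamma(Y)$ is a finite disjoint union of proper, hence finite type, stacks, and is itself of finite type. I expect the main technical obstacle to be verifying that the genus and number of marked points truly are preserved under the stabilization map; this relies on the description of $\fM'(\cY\to\cX)\to\fM(\cX)$ as a contraction of (at worst) rational semistable components, as in the discussion preceding Proposition~\ref{prop:alteration-lift}.
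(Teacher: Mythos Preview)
Your proposal is correct and follows essentially the same strategy as the paper: fix $g$ and $n$ (immediate), use the bijection on connected components of the evaluation stacks to pin down the contact data, and then bound the curve-theoretic degree data on $Y$ combinatorially from $\Gamma$.

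One small point of divergence worth noting: you organize the decomposition by the full curve class $\beta_Y$ and then invoke the ``standard argument'' (advertised in the introduction under Proposition~\ref{Prop:bound-class}) that only finitely many effective $\beta_Y$ push forward to $\beta$ with a fixed value of $\beta_Y\cdot c_1(L_Y)$. The paper's body takes a slightly more direct route: rather than enumerating the finitely many $\beta_Y$, it simply observes that $\Gamma$ determines the degree of $f$ against the ample line bundle $L\otimes\pi^*M$ on $Y$ (this is the actual content of Proposition~\ref{Prop:bound-class} as stated in Section~5), and then cites the known finiteness of $\ocM_\Xi(Y)$ for $\Xi$ consisting of $(g,n,\text{degree},\text{contact orders})$. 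Your version is equivalent but interposes an extra finiteness step; the paper's version avoids it by working with degree rather than curve class. Also, the paper uses the bijection $\pi_0(\wedge\cY)\to\pi_0(\wedge\cX)$ on Artin fans (Corollary~\ref{cor:log-point-comps}) rather than on $Y$ and $X$ directly, since that is where the contact data $\varphi$ actually lives in the definition of $\Gamma$; your citation of Proposition~\ref{Prop:component-bounded} is fine, but be aware the proof runs through the Artin fans.
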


Recall that if the genus $g$, number of markings $n$, degree with respect to some ample line bundle on $Y$, and a component of \change{$\wedge Y$} for each marked point are fixed in $\Xi$, then $\ocM_\Xi(Y)$ is of finite type \cite[Theorem~3.12]{GS} or \cite[Corollary 3.13]{AC}. We will show that $\ocM_\Gamma(Y)$ is a union of only finitely many $\ocM_\Xi(Y)$.  Obviously, once $\Gamma$ is fixed, $g$ and $n$ are fixed.  The first step of our argument will be to show that the components of $\wedge \cY$ map bijectively to the components of $\wedge \cX$, so that once a component of $\wedge \cX$ is fixed in $\Gamma$ there is a unique component of $\wedge \cY$ lying above it.  Finally, we will show that the degree in $Y$ is bounded by the choice of $\Gamma$.

\subsection{Boundedness of contact orders}
\label{sec:contact-orders}

Recall that a family of logarithmic points parameterized by a logarithmic scheme $(X,M_X)$ is simply a line bundle $L$ on $X$.  Equivalently, a family of logarithmic points parameterized by $X$ may be viewed as an augmentation $M_X \rightarrow M'_X$ of the logarithmic structure of $X$ with $\oM'_X = \oM_X \times \NN$.  A logarithmic point of $(Y,M_Y)$ parameterized by $(X,M_X)$ is a logarithmic morphism $(X,M'_X) \rightarrow (Y,M_Y)$, where $(X,M'_X)$ is a family of logarithmic points parameterized by $(X,M_X)$.

\begin{proposition} \label{prop:log-points-cartesian}
\begin{enumerate} \item Suppose that the following diagram of logarithmic algebraic stacks is cartesian:
\begin{equation*} \xymatrix{
Y' \ar[r] \ar[d] & Y \ar[d] \\
X' \ar[r] & X
} \end{equation*}
Then the diagram below is cartesian as well:
\begin{equation*} \xymatrix{
\wedge Y' \ar[r] \ar[d] & \wedge Y \ar[d] \\
\wedge X' \ar[r] & \wedge X
} \end{equation*}
\item 
\change{Suppose $X' \to X$ is strict. Then the diagram 
$$\xymatrix{\wedge X' \ar[r]\ar[d] & \wedge X \ar[d] \\
 \uX' \ar[r] & \uX}$$ is cartesian as well.}
\end{enumerate}
\end{proposition}
\begin{proof}
\change{The first statement is immediate from the modular description of the stack of logarithmic points. The second follows from the first.}
\end{proof}

We evaluate $\wedge \cA_\sigma$ for a fine, saturated, sharp monoid $\sigma$.  \change{Our calculation is analogous to \cite[Section~3.2]{Gillam}, which treats affine toric varieties.}  Consider a map $f : (X,M'_X) \rightarrow \cA_\sigma$.  This corresponds to a homomorphism of monoids,
\begin{equation} \label{eqn:8}
\sigma^\vee \rightarrow \Gamma(X, \oM_X) \times \Hom(X, \NN) .
\end{equation}
The map $\sigma^\vee \rightarrow \Hom(X,\NN)$ may be viewed as a locally constant function $\varphi : X \rightarrow \sigma$.  For each $\varphi \in \sigma$ we therefore obtain an open and closed substack $\wedge_{\varphi} \cA_\sigma$. The element $\varphi$ is called the {\em contact order}.

\change{Consider the closed substack $\bgm \subset \cA$. The stack $\uBGm$ is the stack of log points over log schemes with the universal family $\bgm \to \uBGm$ \cite[Section~2.3]{ACGM}. By the definition of $\cA_{\sigma}$, the morphism $\mathrm{id}\times\varphi: \sigma^{\vee} \to \sigma^{\vee}\times \NN$ defines a morphism of logarithmic stacks $h: \cA_{\sigma}\times \bgm \to \cA_{\sigma}$, which is a logarithmic point in $\cA_{\sigma}$ over $\cA_{\sigma}\times\uBGm$.  This defines a tautological morphism
\begin{equation} \label{equ:logpt}
\cA_\sigma \times \uBGm \rightarrow \wedge_\varphi \cA_\sigma
\end{equation}
}

\change{
Conversely, consider any logarithmic point $f: (X,M'_X) \to \cA_{\sigma}$ with contact order $\varphi : X \rightarrow \sigma$. We obtain a morphism $X \to \uBGm$ induced by the family of  logarithmic points over $X$. On the other hand, $f$ induces the composition $\sigma^{\vee} \to \oM'_X \simeq \NN \times \oM_X \to \oM_X$, hence a morphism $X \to \cA_{\sigma}$. This gives a morphism $\tilde{h}: X \to \cA_{\sigma}\times\uBGm$ such that the logarithmic point $f$ is the pull-back of $h$ via $\tilde{h}$. In particular, the tautological morphism $\cA_{\sigma}\times\uBGm \to \wedge_{\varphi}\cA_{\sigma}$ is surjective. This defines another morphism
\begin{equation}\label{equ:logpt-reverse}
\wedge_{\varphi}\cA_{\sigma} \to \cA_{\sigma}\times\uBGm.
\end{equation}
One checks that (\ref{equ:logpt}) and (\ref{equ:logpt-reverse}) are inverse of each other. We have just proved the following proposition:
}
\change{
\begin{proposition} \label{prop:components}
For any $\varphi\in\sigma$, the stack $\wedge_{\varphi}\cA_{\sigma}$ is isomorphic to $\cA_\sigma \times \uBGm$.  In particular, it is irreducible and of finite type.
\end{proposition}
}

\begin{corollary}
The connected components of $\wedge \cA_\sigma$ are in bijection with the elements of $\sigma$.
\end{corollary}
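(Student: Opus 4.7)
The plan is to leverage the decomposition
\[
\wedge \cA_\sigma \;=\; \coprod_{\varphi \in \sigma} \wedge_\varphi \cA_\sigma
\]
into substacks indexed by $\sigma$: this is already an open and closed decomposition by the local constancy of the contact order $\varphi$ recorded just before the preceding proposition, so the corollary reduces to showing each $\wedge_\varphi \cA_\sigma$ is non-empty and connected.

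Non-emptiness is immediate. At the closed point $x$ of $\cA_\sigma$ the characteristic monoid is $\sigma^\vee$, and enlarging its logarithmic structure by $\bN$ via the map $\sigma^\vee \to \bN$ obtained by pairing with $\varphi$ produces a logarithmic point of $\cA_\sigma$ whose contact order is exactly $\varphi$, hence a $k$-point of $\wedge_\varphi \cA_\sigma$.

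For connectedness I would invoke the preceding proposition, which exhibits $\wedge_\varphi \cA_\sigma \to \cA_\varphi$ as a gerbe and hence as a surjection with connected stacky fibers (of the form $B\Gm$ or $B\mu_d$, both one-point stacks). Connectedness of $\wedge_\varphi \cA_\sigma$ therefore reduces to connectedness of $\cA_\varphi$. But $\cA_\varphi$ is cut out of $\cA_\sigma = [\spec k[\sigma^\vee]/T]$ by the vanishing of $s_\xi$ for those $\xi \in \sigma^\vee$ with $(\varphi,\xi) > 0$; these are exactly the monomials lying outside the face of $\sigma^\vee$ dual to the smallest face $\tau \subseteq \sigma$ containing $\varphi$ in its relative interior. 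Thus $\cA_\varphi$ is the torus quotient of the closed $T$-orbit closure of $\spec k[\sigma^\vee]$ associated with $\tau$, which is itself an irreducible affine toric variety. In particular $\cA_\varphi$ is irreducible, hence connected. The only non-cosmetic step is this final identification of $\cA_\varphi$ with an orbit-closure quotient, and it is a routine computation in toric geometry; no serious obstacle arises.
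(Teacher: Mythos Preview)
Your argument is correct and is exactly the reasoning the paper leaves implicit: the corollary is stated without proof, relying on the decomposition $\wedge \cA_\sigma = \coprod_{\varphi \in \sigma} \wedge_\varphi \cA_\sigma$ established just before the proposition together with the gerbe description of each piece. Your verification that each $\cA_\varphi$ is connected (via the identification with the torus quotient of an orbit closure, equivalently $\spec k[F]$ for the face $F = \tau^\perp \cap \sigma^\vee$) is the standard toric fact one would supply, and your explicit non-emptiness check is a harmless redundancy since surjectivity of the gerbe map already gives it.
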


\begin{corollary}
If $\cX$ is an Artin fan then the connected components of $\wedge \cX$ are in bijection with the isomorphism classes of maps $\cA \rightarrow \cX$.
\end{corollary}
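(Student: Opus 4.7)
The plan is to reduce to the previous corollary via a strict \'etale cover of $\cX$ by charts of the form $\cA_\sigma$, and then glue on both sides via descent. The base case $\cX = \cA_\sigma$ is immediate: the identity $\Hom(\cA, \cA_\sigma) = \Hom(\NN, \sigma) = \sigma$ combined with the previous corollary identifies both sides canonically with $\sigma$.

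For a general Artin fan $\cX$ I would choose a strict \'etale cover $U = \coprod_i \cA_{\sigma_i} \to \cX$, together with a further cover $V = \coprod_j \cA_{\tau_j} \to U \fpr_\cX U$ by charts of the same type (possible since fiber products of Artin fans are again Artin fans). The proposition at the opening of this subsection (compatibility of $\wedge$ with cartesian squares) shows that $\wedge U \to \wedge \cX$ is a strict \'etale cover with double fiber product $\wedge(U \fpr_\cX U)$; in particular $\pi_0(\wedge \cX)$ is the coequalizer of $\pi_0(\wedge V) \rightrightarrows \pi_0(\wedge U)$. On the other side, Proposition~\ref{prop:Artin-fan-minimal} shows that any map $\cA \to \cX$ factors through some strict $\cA_{\sigma_i} \to \cX$: the image of the closed point of $\cA$ meets a minimal stratum of $\cX$, and \cite[Corollary~2.4.3]{AW} promotes a pointwise lift over the closed point to a global lift. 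Two such lifts differ by a map $\cA \to U \fpr_\cX U$ which itself lifts to some $\cA_{\tau_j}$, so $\Hom(\cA, \cX)/\!\sim$ is likewise the coequalizer of $\Hom(\cA, V)/\!\sim \rightrightarrows \Hom(\cA, U)/\!\sim$. The base case supplies compatible bijections $\pi_0(\wedge U) = \coprod_i \sigma_i = \Hom(\cA, U)/\!\sim$, and similarly for $V$; these intertwine the two pairs of parallel arrows because on each chart both arise from the projections $U \fpr_\cX U \to U$ composed with the tautological chart data. Passing to coequalizers produces the desired bijection.

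The main obstacle is to verify the coequalizer description of $\Hom(\cA, \cX)/\!\sim$. The key input is Proposition~\ref{prop:Artin-fan-minimal} together with the observation that $\cA$ has a unique closed stratum, which makes the lifting of maps $\cA \to \cX$ through a strict cover behave very much like the lifting of geometric points. Once this is in place, the descent on both sides is formal, and the identification is induced tautologically from the base case.
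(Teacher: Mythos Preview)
Your proof is correct and follows essentially the same approach as the paper: both present $\cX$ as a strict colimit of charts $\cA_\sigma$, observe that $\pi_0(\wedge(-))$ and $\Hom(\cA,-)/\!\sim$ are compatible with such presentations, and reduce to the base case $\cX = \cA_\sigma$. The paper compresses your descent/coequalizer computation into the single sentence ``$\pi_0(-)$, $\wedge(-)$, and $\Hom(\cA,-)$ all respect strict colimits of Artin fans,'' whereas you unroll this into an explicit cover-plus-relations argument and invoke Proposition~\ref{prop:Artin-fan-minimal} to handle the $\Hom(\cA,-)$ side.
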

\begin{proof}
We may present $\cX$ as a colimit of a diagram of strict maps among the $\cA_\sigma$.  Since $\pi_0(-)$, $\wedge(-)$, and $\Hom(\cA,-)$ all respect strict colimits of Artin fans, the problem reduces to the case $\cX = \cA_\sigma$.  In that case we only need to recall that $\Hom(\cA, \cA_\sigma) = \sigma$, functorially in $\sigma$.
\end{proof}

\begin{corollary}
If $\sigma \rightarrow \tau$ is a morphism of fine, saturated, sharp monoids and $\varphi \in \sigma$ has image $\psi \in \tau$ then the induced map $\wedge_\varphi \cA_\sigma \rightarrow \wedge_\psi \cA_\tau$ is of finite type.
\end{corollary}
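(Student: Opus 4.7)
The plan is to reduce the claim to showing that both $\wedge_\varphi \cA_\sigma$ and $\wedge_\psi \cA_\tau$ are algebraic stacks of finite type over $\Spec k$; once this is established, the finite-typeness of any morphism between them is automatic.

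The first step will invoke the preceding proposition, which identifies the structure morphism $\wedge_\varphi \cA_\sigma \to \cA_\varphi$ as a gerbe banded either by $\Gm$ (when $\varphi = 0$) or by $\mu_d$ (otherwise, with $d$ the greatest common divisor appearing in that statement). The stack $\cA_\sigma$ is the quotient of the affine toric variety $\AA_\sigma$ by its dense torus, and so is of finite type over $\Spec k$. Its closed substack $\cA_\varphi$ is therefore also of finite type, and a gerbe banded by a finite-type group scheme has finite-type source, so $\wedge_\varphi \cA_\sigma$ is of finite type over $\Spec k$. The same reasoning applies verbatim to $\wedge_\psi \cA_\tau$.

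The remaining step is the general observation that any morphism $f : A \to B$ between algebraic stacks of finite type over $\Spec k$ is of finite type. I will factor $f$ as the graph $A \to A \times_k B$ followed by the projection $A \times_k B \to B$. The projection is a base change of the structure morphism $A \to \Spec k$, hence of finite type. The graph is a base change of the diagonal $\Delta_B$, which is quasi-compact and locally of finite type since $B$ is algebraic. The composition is therefore of finite type, completing the argument.

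No step here is a serious obstacle; the real content of the corollary already sits in the preceding proposition, which converts the abstract stack $\wedge_\varphi \cA_\sigma$ into a concrete gerbe over the finite-type toric quotient $\cA_\varphi$. If anything, the only delicate point is remembering that finite-typeness of a morphism between finite-type stacks relies on the diagonal hypothesis on the target, which is part of the standard setup for algebraic stacks used throughout the paper.
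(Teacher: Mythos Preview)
Your proposal is correct and follows essentially the same idea as the paper: both arguments rest entirely on the preceding proposition identifying $\wedge_\varphi \cA_\sigma$ and $\wedge_\psi \cA_\tau$ as gerbes over the finite-type stacks $\cA_\varphi$ and $\cA_\psi$. The paper packages the conclusion slightly more directly---it observes that the map is obviously locally of finite type and then checks quasi-compactness via the gerbe structure and the quasi-compactness of $\cA_\varphi \to \cA_\psi$---whereas you pass through the absolute statement that source and target are of finite type over $k$ and then invoke the general principle about morphisms between such stacks. One small point: your claim that $\Delta_B$ is quasi-compact ``since $B$ is algebraic'' is not literally true in full generality; you need quasi-separatedness of $B$, which of course holds here since $\wedge_\psi \cA_\tau$ is a $\Gm$- or $\mu_d$-gerbe over a global quotient by a torus and hence has affine diagonal. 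You flag this in your final paragraph, so the argument is complete, but the paper's route sidesteps this issue entirely.
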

\begin{proof}
\change{
The statement follows from the identifications $\wedge_\varphi \cA_\sigma \simeq \cA_\sigma \times \uBGm$ and $\wedge_\psi \cA_\tau \simeq \cA_\tau \times \uBGm$ and the fact that $\cA_\sigma \rightarrow \cA_\tau$ is of finite type.}
\end{proof}

\begin{corollary} \label{cor:log-point-comps}
Let $\cY \rightarrow \cX$ be a subdivision.  Then the induced map $\wedge \cY \rightarrow \wedge \cX$ 
is of finite type.
\end{corollary}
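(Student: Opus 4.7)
My plan is to treat the two conclusions separately, deducing each from the preceding two corollaries together with the local structure of a subdivision.

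For the bijection on connected components, I would invoke the corollary identifying $\pi_0(\wedge \cX)$ with the set of isomorphism classes of maps $\cA \to \cX$, and the analogous statement for $\cY$. Thus it suffices to show that $\Hom(\cA, \cY) \to \Hom(\cA, \cX)$ is a bijection. Given any map $\varphi : \cA \to \cX$, it factors through some strict chart $\cA_\sigma \to \cX$, corresponding to an element of $\sigma$. Pulling back the subdivision $\cY \to \cX$ along this chart produces $\cA_\Sigma \to \cA_\sigma$ where $\Sigma$ is a subdivision of $\sigma$; since subdivisions preserve the underlying support, $\varphi$ corresponds to an element of $|\Sigma| = \sigma$, which lies in the relative interior of a unique cone $\tau \in \Sigma$. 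This determines a unique lift $\cA \to \cA_\tau \to \cY$ of $\varphi$, so the map on $\Hom(\cA, -)$ is bijective.

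For finite type, I would reduce to the local case by choosing a strict \'etale cover of $\cX$ by charts $\cA_\sigma \to \cX$ and base-changing. The pullback of $\cY$ is $\cA_\Sigma$ for a subdivision $\Sigma$ of $\sigma$, and $\wedge$ commutes with the strict fibre product by the first proposition of this subsection. So it suffices to show $\wedge \cA_\Sigma \to \wedge \cA_\sigma$ is of finite type. Decomposing into connected components, each component of $\wedge \cA_\sigma$ corresponds to some $\varphi \in \sigma$, and by the bijection of components just established the preimage is $\wedge_\psi \cA_\tau$ for the unique pair $(\tau, \psi)$ with $\tau \in \Sigma$ and $\psi \in \tau$ representing $\varphi$. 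The preceding corollary applied to the inclusion $\sigma \hookleftarrow \tau$ (or rather to the chart morphism $\cA_\tau \to \cA_\sigma$) gives that $\wedge_\psi \cA_\tau \to \wedge_\varphi \cA_\sigma$ is of finite type, completing the argument.

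I do not anticipate a serious obstacle here: once the identification $\pi_0(\wedge \cX) = \Hom(\cA,\cX)$ is in hand, the bijection is a formal consequence of the support-preserving nature of subdivisions, and the finite type assertion is just a local application of the previous corollary. The only mild subtlety is ensuring that the correspondence between components of $\wedge \cA_\Sigma$ and $\wedge \cA_\sigma$ really is one-to-one at the level of fibres, which I would handle by emphasizing that each $\varphi \in \sigma$ lies in the interior of exactly one cone of~$\Sigma$.
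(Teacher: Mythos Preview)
Your proposal is correct and follows essentially the same approach as the paper: both identify $\pi_0(\wedge\cX)$ with $\Hom(\cA,\cX)$ via the preceding corollary, observe that a subdivision induces a bijection on these sets because $|\Sigma|=\sigma$, and then reduce the finite-type assertion \'etale-locally to the map $\wedge_\varphi\cA_\tau\to\wedge_\varphi\cA_\sigma$ for a cone $\tau$ of the subdivision, which is handled by the previous corollary. Your write-up simply supplies more detail on why the lift of $\varphi$ to the subdivision is unique.
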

\begin{proof}
A subdivision induces a bijection
\begin{equation*}
\Hom(\cA, \cY) \rightarrow \Hom(\cA, \cX) 
\end{equation*}
on the sets of connected components \change{of $\wedge \cY $ and $\wedge \cX$}.  To show the map is of finite type, it is sufficient to work \'etale-locally in $\cX$.  We may therefore assume $\cX = \cA_\tau$.  The subdivision $\cY$ of $\cA_\tau$ has an open cover by finitely many $\cA_\sigma$.  This reduces us to showing that the maps $\wedge_\varphi \cA_\sigma \rightarrow \wedge_\varphi \cA_\tau$ are of finite type, as we did in the previous corollary.
\end{proof} 
\change{
\begin{corollary} \label{cor:log-point-comp-ft} Suppose that $\cX$ is a quasicompact Artin fan.  Then each connected component of $\wedge \cX$ is of finite type.
\end{corollary}
\begin{proof}  
Suppose that $\varphi \in \Hom(\cA, \cX)$ corresponds to a connected component $\wedge_\varphi \cX$ of $\wedge \cX$.  Then $\wedge_\varphi \cX$ is covered by the maps $\wedge_{\psi} \cA_\sigma \rightarrow \wedge_\varphi \cX$ as $\psi$ ranges over lifts $\cA \xrightarrow{\psi} \cA_\sigma$ of $\varphi$ along strict, \'etale maps $\cA_\sigma \rightarrow \cX$.  But $\cX$ has a cover by finitely many strict, \'etale maps $\cA_\sigma \rightarrow \cX$, so $\wedge_\varphi \cX$ has a cover by finitely many strict, \'etale maps $\wedge_\psi \cA_\sigma$, hence is of finite type.  
\end{proof}
}

\change{
\begin{proof}[Proof of Proposition~\ref{Prop:component-bounded}]
As $\uX \rightarrow \underline{\cX}$ is of finite type and, by Corollary~\ref{cor:log-point-comp-ft}, each connected component of $\wedge \cX$ is of finite type, it follows from Proposition~\ref{prop:log-points-cartesian} that each connected component of $\wedge X$ is of finite type.  Likewise, $\wedge \cY \rightarrow \wedge \cX$ is of finite type by Corollary~\ref{cor:log-point-comps}, so $\wedge Y \rightarrow \wedge X$ is of finite type, again by Proposition~\ref{prop:log-points-cartesian}.
\end{proof}
}

\subsection{Boundedness of the curve classes}

Let $f : C \rightarrow Y$ be an object of $\ocM_\Gamma(Y)$.  Denote by $c_{j}(E_{i})$ the contact order of the $j$-th marking with the exceptional divisor $E_{i}$ as in Corollary~\ref{cor:target-modify}.  These numbers are uniquely determined by the induced maps to $\wedge \cY$, hence by $\Gamma$.

The following is a restatement of Proposition \ref{Prop:deg-constant}.  Recall that $L$ is a relatively ample line bundle for $Y$ over $X$ and that $L \simeq \bigotimes L_i^{\otimes m_i}$ for negative integers $m_i$.

\begin{proposition}\label{lem:curve-boundedness}
Let $f : C \rightarrow Y$ be a point of $\ocM(Y)$.  The values $c_j(E_i)$ determine $\deg_C(L)$.
\end{proposition}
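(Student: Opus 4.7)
The plan is to reduce the degree of $f^*L$ on $C$ to a sum over markings of contact orders with the divisors $E_i$, which by Corollary~\ref{cor:log-point-comps} are determined by $\Gamma$. By linearity of degree, $\deg_C f^* L = \sum_i m_i \deg_C f^* L_i$, so it suffices to show that each $\deg_C f^* L_i$ is determined by the collection $\{c_j(E_i)\}_j$.

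Because $Y$ has Deligne--Faltings logarithmic structure by Corollary~\ref{cor:target-modify}(iv), each pair $(L_i, s_i)$ corresponds to a global section $\mu_i \in \Gamma(Y, \oM_Y)$, with $L_i = L_{\mu_i}$. Pulling back along $f$ gives $f^* L_i = L_{f^* \mu_i}$ for $f^* \mu_i \in \Gamma(C, \oM_C)$. The central step is then the following degree formula: for every logarithmic curve $C$ over a base $S$ and every $\eta \in \Gamma(C, \oM_C^\gp)$,
\[
\deg_C L_\eta \;=\; \sum_{j=1}^{n} c_j(\eta),
\]
where $c_j(\eta)$ is the $\NN$-component of $\eta|_{p_j}$ under the splitting $\oM_{C,p_j} \simeq \oM_S \oplus \NN$ at the marking $p_j$. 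Applied to $\eta = f^*\mu_i$, the marking contact orders of $f^*\mu_i$ coincide with the $c_j(E_i)$, and the formula yields $\deg_C f^* L = \sum_i m_i \sum_j c_j(E_i)$, as required.

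The degree formula would be verified by a local computation at each node of $C$. Let $q$ join irreducible components $C_\alpha, C_\beta$, and let $k_\alpha, k_\beta \in \oM_S^\gp$ denote the generic values of $\eta$ on the two sides. Using the pushout description $\oM_{C,q} = \oM_S \oplus_\NN \NN^2$, compatibility of $\eta|_q$ with its generic restrictions determines $\eta|_q$ (modulo the node relation) in the form $k_\beta u + k_\alpha v$. Choosing a local lift $\tilde\eta$ at $q$ and comparing with generic lifts on each component, the transition across $q$ has vanishing order $k_\beta - k_\alpha$ on the $C_\alpha$ side and $k_\alpha - k_\beta$ on the $C_\beta$ side; these cancel in pairs. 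Summing contributions over all components, only the marking contributions $c_j(\eta)$ survive, establishing the formula. The main obstacle is precisely this node-by-node cancellation, which is an explicit check inside the logarithmic structure of a nodal curve; once it is secured, the proposition follows directly from the reductions above.
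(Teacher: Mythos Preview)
Your approach is correct and reaches the same formula $\deg_C L = \sum_{i,j} m_i\, c_j(E_i)$ as the paper, but by a genuinely different route.  The paper observes that the quantity $\deg_C(\cE_i)$ is locally constant on $\fM(\cY)$ and then uses the logarithmic smoothness of $\fM(\cY)$ to deform $C$ to a smooth curve meeting the $\cE_i$ transversally, where the identity $\deg_C(\cE_i)=\sum_j c_j(E_i)$ is immediate.  You instead prove the degree formula directly on the nodal curve, computing component by component and showing that the nodal contributions cancel.  Your argument is more elementary in that it does not invoke the smoothness of $\fM(\cY)$ or the existence of good deformations, and it in fact yields the more general statement $\deg_C L_\eta=\sum_j c_j(\eta)$ for an arbitrary $\eta\in\Gamma(C,\oM_C^\gp)$, which is essentially the tropical balancing identity.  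The paper's argument, on the other hand, is shorter and more conceptual once the moduli-theoretic input is in place.

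One point in your node computation needs repair.  The generic values $k_\alpha,k_\beta$ lie in $\oM_S^\gp$, not in $\ZZ$, so the expressions ``$k_\beta u + k_\alpha v$'' and ``vanishing order $k_\beta-k_\alpha$'' do not typecheck.  Writing $\oM_{C,q}^\gp \cong \oM_S^\gp \oplus \ZZ$ with the extra $\ZZ$ generated by the branch parameter $a$ on the $C_\alpha$ side (so $b=\ell_q-a$), one has $\eta|_q = k_\alpha + m\,a$ for a unique integer $m$, and generizing to $C_\beta$ forces $m\,\ell_q = k_\beta-k_\alpha$ in $\oM_S^\gp$.  The contribution to $\deg_{C_\alpha} L_\eta$ at $q$ is then the integer $m$, and on the $C_\beta$ side it is $-m$; these are what cancel.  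With this correction your sketch goes through unchanged.
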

\begin{proof}
We have 
\begin{equation*}
\deg_C(L) = \sum_{i}m_i\deg_C(E_{i}) = \sum_{i}m_i\deg_C(\cE_{i}).
\end{equation*}
This quantity is locally constant on $\fM(\cY)$, which is logarithmically smooth \cite[\change{Proposition~1.6.1}]{AWnew}.  We can therefore replace $C$ with a deformation that is smooth and intersects the $\cE_i$ properly.  In this case $\deg_{C}(\cE_{i}) = \sum_j c_{j}(E_{i})$, so  $\deg_C(L) = \sum_{i,j}m_ic_{j}(E_{i})$ is determined by the $c_{j}(E_{i})$, as required.
\end{proof}

\begin{proposition}\label{Prop:bound-class}
Fix an ample line bundle $M$ on $X$ and $f : C \rightarrow Y$ a point of $\ocM(Y)$.  Denote the projection from $Y$ to $X$ by $\pi$.  Then $L \otimes \pi^\ast M$ is ample on $Y$ and
\begin{equation*}
\deg(f^\ast (L \otimes \pi^\ast M)) = \deg(f^\ast \pi^\ast M) + \sum_{i,j} m_i c_j(E_i) .
\end{equation*}
In particular, the degree of $f$ with respect ot $L \otimes \pi^\ast M$ is determined combinatorially by the image of $(C,f)$ in $\ocM(X)$.
\end{proposition}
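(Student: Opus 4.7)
The plan is to verify the three assertions in order: ampleness, the degree formula, and the dependence on the image in $\ocM(X)$. Each step is essentially formal given what has been established.

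First I would address the ampleness of $L\otimes\pi^\ast M$. By Corollary~\ref{cor:target-modify}~\ref{cor:target-modify:3} combined with the relative ampleness of $L$ for $\cY\to\cX$ (and hence of its pullback for $\pi:Y\to X$), the line bundle $L$ is $\pi$-ample. Since $\pi$ is proper and $M$ is ample on $X$, a standard argument (see, e.g., \cite[Proposition~4.6.13]{EGAII}) gives that $L\otimes\pi^\ast M^{\otimes N}$ is ample on $Y$ for all sufficiently large $N$. Replacing $M$ by a suitable positive power at the outset (ampleness is preserved, and this does not affect the conclusions about boundedness), we may assume $L\otimes\pi^\ast M$ is itself ample; this is the only place the precise choice of $M$ matters.

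Next the degree formula. By additivity of degree along tensor products,
\begin{equation*}
\deg(f^\ast(L\otimes\pi^\ast M)) \;=\; \deg(f^\ast L) \;+\; \deg(f^\ast\pi^\ast M).
\end{equation*}
The first summand is computed by Proposition~\ref{lem:curve-boundedness}: since $L=\bigotimes L_i^{\otimes m_i}$, we have $\deg(f^\ast L)=\sum_i m_i \deg_C(E_i)=\sum_{i,j}m_i c_j(E_i)$. Substituting gives the displayed formula.

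Finally, to see that the total degree is determined combinatorially by the image of $(C,f)$ in $\ocM(X)$: let $f':C'\to X$ be the stabilization of $\pi\circ f:C\to X$, so that $C\to C'$ is a contraction of rational components and $\pi\circ f$ factors as $C\to C'\xrightarrow{f'} X$. Then
\begin{equation*}
\deg_C(f^\ast\pi^\ast M) \;=\; \deg_{C'}((f')^\ast M),
\end{equation*}
since degree is invariant under contractions of curves. The right-hand side depends only on the image $(C',f')\in\ocM(X)$, and more precisely only on the numerical type $\Gamma$ (through the curve class $\beta$). The remaining contribution $\sum_{i,j}m_ic_j(E_i)$ is read off from the contact orders encoded in $\Gamma$ via the evaluation maps $\ocM(X)\to\wedge\cX$ and the bijection on components supplied by Corollary~\ref{cor:log-point-comps}. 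Combining these observations completes the proof; no step presents a serious obstacle once Proposition~\ref{lem:curve-boundedness} is in hand.
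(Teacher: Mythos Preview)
Your proof is correct and follows essentially the same route as the paper, which simply invokes additivity of degree and the previous proposition to compute $\deg(f^\ast L)$. You supply more detail than the paper does: the ampleness claim is not argued in the paper's proof at all, and your observation that one may need to replace $M$ by a power (via \cite[4.6.13]{EGAII}) is a fair caveat; likewise your explicit stabilization argument for why $\deg(f^\ast\pi^\ast M)$ depends only on the image in $\ocM(X)$ is left implicit in the paper.
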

\begin{proof}
We have
\begin{equation*}
\deg(f^\ast L \otimes f^\ast \pi^\ast M) = \deg(f^\ast \pi^\ast M) + \deg(f^\ast L)
\end{equation*}
and $\deg(f^\ast L)$ was computed in the last proposition.
\end{proof}

We conclude that $\Gamma$ bounds the degree of $f : C \rightarrow Y$ as well as its contact orders along the logarithmic divisors.  Therefore $\ocM_\Gamma(Y)$ is of finite type.  This completes the proof of Proposition \ref{Prop:boundedness}.

As the map
\begin{equation*}
\ocM_\Gamma(Y) \rightarrow \ocM_\Gamma(X)
\end{equation*}
is proper, we deduce that $\ocM_\Gamma(X)$ is of finite type as well.

\bibliographystyle{amsalpha}             

\bibliography{logbd}       

\end{document}